\documentclass[a4paper,12pt,draft]{amsart}
\usepackage[margin=30mm]{geometry}
\usepackage{amssymb, amsmath, amsthm, amscd}

\usepackage[T1]{fontenc}
\usepackage{eucal,mathrsfs,dsfont}
\usepackage{color}
\usepackage{verbatim}
\usepackage{enumerate}

\newcommand{\rene}{\color{black}}

\newcommand{\normal}{\color{black}}

\renewcommand{\leq}{\leqslant}
\renewcommand{\geq}{\geqslant}
\renewcommand{\le}{\leqslant}
\renewcommand{\ge}{\geqslant}
\renewcommand{\Re}{\ensuremath{\mathop{\mathrm{Re}}}}

\newcommand{\scalar}[1]{\langle#1\rangle}
\newcommand{\nnorm}[1]{\|#1\|}
\definecolor{mno}{rgb}{0.5,0.1,0.5}

\newcommand{\R}{\mathds R}
\newcommand{\real}{\mathds R}
\newcommand{\Rd}{{\mathds R^d}}

\newcommand{\Ee}{\mathds E}

\newcommand{\I}{\mathds 1}

\newcommand{\Bb}{\mathscr{B}}

\newtheorem{theorem}{Theorem}
\newtheorem{lemma}[theorem]{Lemma}
\newtheorem{proposition}[theorem]{Proposition}

\theoremstyle{definition}

\newtheorem{remark}[theorem]{Remark}

\newtheorem*{exm}{Example}
\newtheorem*{ack}{Acknowledgement}

\begin{document}

\title[Functional Inequalities and Subordination]{\bfseries Functional Inequalities and Subordination:\\
Stability of Nash and Poincar\'{e} inequalities}
\author{Ren\'{e} L.\ Schilling \and  Jian Wang}
\thanks{\emph{R.L.\ Schilling:} TU Dresden, Institut f\"{u}r Mathematische Stochastik, 01062 Dresden, Germany.
    \texttt{rene.schilling@tu-dresden.de}
    }
\thanks{\emph{J.\ Wang 
:}
   School of Mathematics and Computer Science, Fujian Normal University, 350007 Fuzhou, P.R. China \emph{and} TU Dresden, Institut f\"{u}r Mathematische Stochastik, 01062 Dresden, Germany.
   \texttt{jianwang@fjnu.edu.cn
   }
    }

\maketitle

\begin{abstract}
We show that certain functional inequalities, e.g.\ Nash-type and
Poincar\'e-type inequalities, for infinitesimal generators of $C_0$
semigroups are preserved under subordination in the sense of
Bochner. Our result improves \cite[Theorem 1.3]{BM} by A.\ Bendikov
and P.\ Maheux for fractional powers, and it also holds for
non-symmetric settings. As an application, we will derive
hypercontractivity, supercontractivity and ultracontractivity of
subordinate semigroups.

\medskip

\noindent\textbf{Keywords:} subordination,
Bernstein function, Nash-type inequality, super-Po\-in\-ca\-r\'{e}
inequality, weak Poincar\'{e} inequality.

\medskip

\noindent \textbf{MSC 2010:} 47D60, 60J35, 60J75, 60J25,
60J27.
\end{abstract}

\section{Introduction}\label{section1}
In this note we show that certain functional inequalities are preserved under subordination in the sense of Bochner.

Bochner's subordination is a method to get new semigroups from a given one.
Let us briefly summarize the main facts about subordination; our main reference is the monograph \cite{RSZ},
in particular Chapter 12. Let $(T_t)_{t\ge0}$ be a strongly continuous ($C_0$) contraction semigroup on a Banach space $(\Bb,\|\cdot\|)$. The infinitesimal generator is the operator
\begin{align*}
    Au  &:= \lim_{t\to 0} \frac{u - T_tu}{t},\\
    D(A) &:= \left\{u\in\Bb \::\: \lim_{t\to 0} \frac{u - T_tu}{t}\text{\ \ exists in the strong sense}\right\}.
\end{align*}

A \emph{subordinator} is a vaguely continuous convolution semigroup of sub-probability measures $(\mu_t)_{t\ge 0}$ on $[0,\infty)$. Subordinators are uniquely characterized by the Laplace transform:
$$
    \mathscr L\mu_t(\lambda) = \int_{[0,\infty)} e^{-s\lambda}\,\mu_t(ds) = e^{-tf(\lambda)}
    \quad\text{for all\ \ } t\ge0
    \text{\ \ and \ \ } \lambda \ge 0.
$$
The characteristic exponent $f\!:\!(0,\infty)\to(0,\infty)$ is a \emph{Bernstein function}, i.e.\ a function of the form
\begin{equation}\label{bfunction}
    f(\lambda)=a+b\lambda+\int_{(0,\infty)}\big(1-e^{-t\lambda}\big)\,\nu(dt),
\end{equation}
where $a, b\ge 0$ are nonnegative constants and $\nu$ is a nonnegative measure on $(0,\infty)$ satisfying $\int_{(0,\infty)}\big(1\wedge t\big)\,\nu(dt)<\infty$. There are one-to-one relations between the triplet $(a,b,\nu)$, the Bernstein function $f$ and the subordinator $(\mu_t)_{t\geq 0}$. Among the most prominent examples of Bernstein functions are the fractional powers $f_\alpha(\lambda) = \lambda^\alpha$, $0<\alpha\le1$. The Bochner integral
$$
    T_t^fu := \int_{[0,\infty)}T_su\,\mu_t(ds),\quad t\ge 0,\; u\in\Bb,
$$
defines a strongly continuous contraction semigroup on $\Bb$. We call $(T_t^f)_{t\ge0}$ \emph{subordinate} to $(T_t)_{t\geq 0}$ (with respect to the subordinator $(\mu_t)_{t\geq 0}$ or the Bernstein function $f$). Subordination preserves many additional properties of the original semigroup. For example, on a Hilbert space, $(T_t^f)_{t\geq 0}$ inherits symmetry from $(T_t)_{t\geq 0}$ and on an ordered Banach space $(T_t^f)_{t\geq 0}$ is sub-Markovian whenever $(T_t)_{t\geq 0}$ is. Let us write $(A^f,D(A^f))$ for the generator of $(T_t^f)_{t\ge0}$; it is known that $D(A)$ is an operator core of $A^f$ and that $A^f$ is given by Phillip's formula
\begin{equation}\label{e-phillips}
    A^f u = au + b\,Au + \int_{(0,\infty)} (u-T_su)\,\nu(ds),\quad u\in D(A).
\end{equation}
Here $(a,b,\nu)$ is the defining triplet for $f$ as in \eqref{bfunction}.

Bochner's subordination gives rise to a functional calculus for generators of $C_0$ contraction semigroups. In many situations this functional calculus coincides with classical functional calculi, e.g.\ the spectral calculus in Hilbert space or the Dunford-Taylor spectral calculus in Banach space, cf.\ \cite{ber-boy-del} and \cite{RSZ}. It is, therefore, natural to write $f(A)$ instead of $A^f$.

\bigskip
From now on we will use $\Bb = L^2(X,m)$ where $(X,m)$ is a measure space with a $\sigma$-finite measure $m$. We write $\scalar{\cdot,\cdot}$ and $\|\cdot\|_2$ for the scalar product and norm in $L^2$, respectively; $\|\cdot\|_1$ denotes the norm in $L^1(X,m)$. To compare our result with \cite[Theorem 1.3]{BM}, we start with Nash-type inequalities. Our main contribution to this type of functional inequalities are the following two results.

\begin{theorem}\label{th1} \textup{\bfseries (symmetric case)}
    Let $(T_t)_{t\geq 0}$ be a strongly continuous contraction semigroup of symmetric operators on $L^2(X,m)$ and assume that for each $t\ge0$, $T_t|_{L^2(X,m)\cap L^1(X,m)}$ has an extension which is a contraction on $L^1(X,m)$, i.e.\ $\|T_tu\|_1\le \|u\|_1$ for all $u\in L^1(X,m)\cap L^2(X,m)$. Suppose that the generator $(A,D(A))$ satisfies the following Nash-type inequality:
    \begin{equation}\label{th11}
        \|u\|_2^2\,B\left( \|u\|_2^2\right)
        \le \langle A\,u,u\rangle,
        \quad u\in D(A),\; \|u\|_1=1,
    \end{equation}
    where $B:(0,\infty)\rightarrow(0,\infty)$ is any increasing function. Then, for any Bernstein function $f$, the generator $f(A)$ of the subordinate semigroup satisfies
    \begin{equation}\label{th12}
        \frac{\|u\|_2^2}{2}\,f\left(B\left(\frac{\|u\|_2^2}{2}\right)\right)
        \le \langle f(A)\,u,u\rangle,
        \quad u\in D(f(A)),\; \|u\|_1=1.
    \end{equation}
\end{theorem}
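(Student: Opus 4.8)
The plan is to bypass Phillips' formula \eqref{e-phillips} and to transport the Nash inequality through subordination by combining a differential inequality for the \emph{original} semigroup with the Laplace transform of the subordinator; this route has the additional benefit of reaching all of $D(f(A))$ without a density argument.

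Fix $u\in D(f(A))$ with $\|u\|_1=1$, set $m_0:=\|u\|_2^2>0$, and consider $\psi(s):=\langle T_su,u\rangle=\|T_{s/2}u\|_2^2$. Since $(T_t)_{t\ge0}$ is a symmetric $C_0$ contraction semigroup on the Hilbert space $L^2(X,m)$, its generator $A$ is self-adjoint and nonnegative, and the semigroup is analytic; hence $T_{s/2}u\in D(A)$ for $s>0$, the function $\psi$ is continuous on $[0,\infty)$ and differentiable on $(0,\infty)$, with $\psi(0)=m_0$ and $\psi'(s)=-\langle A\,T_{s/2}u,T_{s/2}u\rangle$. Because $\|T_{s/2}u\|_1\le\|u\|_1=1$ by $L^1$-contractivity and $B$ is increasing, applying \eqref{th11} to $T_{s/2}u/\|T_{s/2}u\|_1$ gives $\langle A\,T_{s/2}u,T_{s/2}u\rangle\ge\psi(s)B(\psi(s))$, i.e.\ $\psi'(s)\le-\psi(s)B(\psi(s))$ for $s>0$. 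Putting $c:=B(m_0/2)\in(0,\infty)$: on the initial interval where $\psi(s)\ge m_0/2$ we have $B(\psi(s))\ge c$, hence $(\log\psi)'\le-c$ and therefore $\psi(s)\le m_0e^{-cs}$ there, while the (non-increasing) $\psi$ stays below $m_0/2$ afterwards; so in all cases $\psi(s)\le\max(m_0e^{-cs},\,m_0/2)$, which rearranges to
\begin{equation*}
    m_0-\psi(s)\ \ge\ \min\!\Big(m_0\big(1-e^{-cs}\big),\ \tfrac{m_0}{2}\Big)\ \ge\ \tfrac{m_0}{2}\big(1-e^{-cs}\big),\qquad s\ge0 .
\end{equation*}

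To pass to the subordinate semigroup I pair $T_t^fu=\int_{[0,\infty)}T_su\,\mu_t(ds)$ with $u$, obtaining $\langle T_t^fu,u\rangle=\int_{[0,\infty)}\psi(s)\,\mu_t(ds)$. Using $\mu_t([0,\infty))=\mathscr L\mu_t(0)=e^{-ta}$ and $\int_{[0,\infty)}e^{-cs}\,\mu_t(ds)=\mathscr L\mu_t(c)=e^{-tf(c)}$ together with the bound above, for every $t>0$
\begin{equation*}
    \|u\|_2^2-\langle T_t^fu,u\rangle= m_0\big(1-e^{-ta}\big)+\int_{[0,\infty)}\big(m_0-\psi(s)\big)\,\mu_t(ds)\ \ge\ m_0\big(1-e^{-ta}\big)+\tfrac{m_0}{2}\big(e^{-ta}-e^{-tf(c)}\big) .
\end{equation*}
Dividing by $t$ and letting $t\downarrow0$ — the left-hand side tends to $\langle f(A)u,u\rangle$ since $u\in D(f(A))$ — I arrive at $\langle f(A)u,u\rangle\ge m_0a+\tfrac{m_0}{2}\big(f(c)-a\big)=\tfrac{m_0}{2}\big(a+f(c)\big)\ge\tfrac{m_0}{2}f(c)$, and with $c=B(\|u\|_2^2/2)$ this is precisely \eqref{th12} (the argument in fact yields the marginally stronger $\langle f(A)u,u\rangle\ge\tfrac12\|u\|_2^2\big(a+f(B(\|u\|_2^2/2))\big)$).

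The conceptually decisive move is the cut-off at level $m_0/2$ in the differential inequality: it is what makes the exponent $B(\|u\|_2^2/2)$ appear in a form compatible with $\int e^{-cs}\mu_t(ds)=e^{-tf(c)}$, hence with $f(B(\cdot))$ in the conclusion. The only technically delicate point is the differentiability of $\psi$ with $\psi'=-\langle A\,T_{s/2}u,T_{s/2}u\rangle$, which rests on analyticity of symmetric $C_0$ contraction semigroups (equivalently, the spectral theorem for the self-adjoint generator); everything else — the homogenisation of \eqref{th11} using that $B$ is increasing, the interchange of $\langle\,\cdot\,,u\rangle$ with the Bochner integral, and the limit $t\downarrow0$ — is routine.
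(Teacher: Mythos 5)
Your proof is correct, and it takes a genuinely different route from the paper's. The paper first reduces to $u\in D(A)$ (a core for $f(A)$) and to $a=b=0$ via Phillips' formula \eqref{e-phillips}, then combines the Gronwall-type decay estimate of Proposition \ref{p1} (built on Lemma \ref{lemma1}) with a change of variables and the \^Okura-type two-sided bound of Lemma \ref{l-okura} to control the integral $\int_{(0,\infty)}\big(\|u\|_2^2-\langle T_su,u\rangle\big)\,\nu(ds)$. You instead work directly on $D(f(A))$: the spectral theorem for the nonnegative self-adjoint $A$ (equivalently, analyticity of the semigroup) justifies the differential inequality $\psi'(s)\le-\psi(s)B(\psi(s))$ for $\psi(s)=\langle T_su,u\rangle$ on $(0,\infty)$; the cutoff at level $m_0/2$ replaces the Gronwall machinery by the elementary exponential bound $\psi(s)\le\max\big(m_0e^{-cs},\,m_0/2\big)$ with $c=B(m_0/2)$; and pairing against $\mu_t$ through the Laplace-transform identities $\mu_t([0,\infty))=e^{-ta}$ and $\int e^{-cs}\,\mu_t(ds)=e^{-tf(c)}$, then differentiating at $t=0$, delivers \eqref{th12} at once --- with no Phillips formula, no density argument, and no \^Okura lemma. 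Running the same argument with a cutoff at $\varepsilon m_0$ yields the parametrised family $\langle f(A)u,u\rangle\ge(1-\varepsilon)\|u\|_2^2\,f\big(B(\varepsilon\|u\|_2^2)\big)$, comparable to but not identical with the refinement in Remark \ref{remark}\,(i). What the paper's route buys is robustness: it adapts to the non-symmetric Theorem \ref{th2}, whereas your argument leans essentially on symmetry --- both for analyticity, hence $T_{s/2}u\in D(A)$ for $s>0$, and for the identity $\langle T_su,u\rangle=\|T_{s/2}u\|_2^2$ that makes the Nash inequality applicable along the flow.
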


\begin{remark} For fractional powers $A^\alpha$, $0<\alpha<1$, the result of Theorem \ref{th1} is due to Bendikov and Maheux \cite[Theorem 1.3]{BM}; this corresponds to the Bernstein functions $f(\lambda)=\lambda^\alpha$. Our result is valid for \emph{all} Bernstein functions, hence, for \emph{all subordinate generators} $f(A)$.
Note that \cite[Theorem 1.3]{BM} claims that
$$
        c_1{\|u\|_2^2}\,\left(B\left(c_2{\|u\|_2^2}\right)\right)^\alpha
        \le \langle A^\alpha\,u,u\rangle,
        \quad u\in D(A^\alpha),\; \|u\|_1=1,
$$
holds for all $0<\alpha <1$ with $c_1=c_2=1$, but a close inspection of the proof in \cite{BM} reveals that one has to assume, in general, $c_1, c_2 \in (0,1)$. Note that Theorem \ref{th1} yields $c_1=c_2=1/2.$
\end{remark}

If $(T_t)_{t\geq 0}$ is not symmetric, we still have the following result.
\begin{theorem}\label{th2} \textup{\textbf{(non-symmetric case)}}
     Let $(T_t)_{t\geq 0}$ be a strongly continuous \rene contraction semigroup on \normal $L^2(X,m)$ and assume that for each $t\ge0$, $T_t|_{L^2(X,m)\cap L^1(X,m)}$ has an extension which is a contraction on $L^1(X,m)$. Suppose that the generator $(A,D(A))$ satisfies the following Nash-type inequality:
    \begin{equation}\label{th21}
        \|u\|_2^2\,B\left( \|u\|_2^2\right)
        \le \Re \langle A\,u,u\rangle,
        \quad u\in D(A),\; \|u\|_1=1,
    \end{equation}
    where $B:(0,\infty)\rightarrow(0,\infty)$ is any increasing function. Then, for any Bernstein function $f$, the generator $f(A)$ of the subordinate semigroup satisfies
    \begin{equation}\label{th22}
        \frac{\|u\|_2^2}{4}\,f\left(2B\left(\frac{\|u\|_2^2}{2}\right)\right)
        \le \Re \langle f(A)\,u,u\rangle,
        \quad u\in D(f(A)),\; \|u\|_1=1.
    \end{equation}
\end{theorem}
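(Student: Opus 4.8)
The plan is to turn the Nash-type inequality \eqref{th21} into a quantitative lower bound for $\Re\langle u-T_su,u\rangle$ along the orbit of $u$, and then to substitute this bound into Phillips' formula \eqref{e-phillips} for $f(A)$. By homogeneity it is enough to prove \eqref{th22} for $u\in D(A)$ with $\|u\|_1=1$; the passage to $u\in D(f(A))$ is handled at the end. So fix such a $u$ and write $\phi(s):=\|T_su\|_2^2$, so that $\phi(0)=\|u\|_2^2$. For $u\in D(A)$ the orbit $s\mapsto T_su$ is $C^1$ in $L^2$ with $\frac{d}{ds}T_su=-AT_su$, hence $\phi'(s)=-2\Re\langle AT_su,T_su\rangle\le 0$ and $\phi$ is non-increasing. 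Since each $T_s$ is an $L^1$-contraction, $\|T_su\|_1\le\|u\|_1=1$, so the scaled form of \eqref{th21} applied to $T_su$ together with the monotonicity of $B$ gives $\Re\langle AT_su,T_su\rangle\ge\|T_su\|_2^2\,B\big(\|T_su\|_2^2/\|T_su\|_1^2\big)\ge\phi(s)B(\phi(s))$. Therefore $\phi'(s)\le-2\phi(s)B(\phi(s))$, and in integrated form $\phi(0)-\phi(s)\ge 2\int_0^s\phi(r)B(\phi(r))\,dr$.

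The core of the argument is a lower bound for $\phi(0)-\phi(s)$ obtained by splitting into two regimes. Set $\beta:=B(\|u\|_2^2/2)=B(\phi(0)/2)$. If $\phi(s)\ge\phi(0)/2$, then $\phi(r)\ge\phi(0)/2$ and, by monotonicity, $B(\phi(r))\ge\beta$ for all $r\in[0,s]$, so $\phi(0)-\phi(s)\ge 2s\cdot(\phi(0)/2)\cdot\beta=s\,\phi(0)\,\beta$; if instead $\phi(s)<\phi(0)/2$, then trivially $\phi(0)-\phi(s)>\phi(0)/2$. Combining the two cases and using the elementary estimate $\min(y,1)\ge 1-e^{-y}$ for $y\ge 0$ gives
\[
    \phi(0)-\phi(s)\ \ge\ \phi(0)\,\min\big(s\beta,\tfrac12\big)\ \ge\ \frac{\phi(0)}{2}\big(1-e^{-2s\beta}\big),\qquad s\ge 0.
\]

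Now I would expand the quadratic form of $f(A)$. With $(a,b,\nu)$ the triplet of $f$ as in \eqref{bfunction}, Phillips' formula \eqref{e-phillips} yields
\[
    \Re\langle f(A)u,u\rangle=a\|u\|_2^2+b\,\Re\langle Au,u\rangle+\int_{(0,\infty)}\Re\langle u-T_su,u\rangle\,\nu(ds),
\]
with all three summands nonnegative. For the middle summand, \eqref{th21} and the monotonicity of $B$ give $\Re\langle Au,u\rangle\ge\phi(0)B(\phi(0))\ge\phi(0)\beta$. For the integrand, Cauchy--Schwarz and Young's inequality give $\Re\langle u-T_su,u\rangle\ge\|u\|_2^2-\|T_su\|_2\|u\|_2\ge\tfrac12\big(\|u\|_2^2-\|T_su\|_2^2\big)=\tfrac12(\phi(0)-\phi(s))\ge\tfrac14\phi(0)\big(1-e^{-2s\beta}\big)$. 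Inserting these bounds and retaining only $a\phi(0)\ge\tfrac14 a\phi(0)$ and $b\phi(0)\beta\ge\tfrac14\cdot 2b\beta\,\phi(0)$ from the first two terms, one obtains
\[
    \Re\langle f(A)u,u\rangle\ \ge\ \frac{\phi(0)}{4}\left(a+2b\beta+\int_{(0,\infty)}\big(1-e^{-2s\beta}\big)\,\nu(ds)\right)=\frac{\phi(0)}{4}\,f(2\beta)
\]
by the representation \eqref{bfunction} evaluated at $2\beta$; since $\phi(0)=\|u\|_2^2$ and $2\beta=2B(\|u\|_2^2/2)$, this is exactly \eqref{th22}. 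For $u\in D(f(A))$ with $\|u\|_1=1$ one then applies the (scaled) inequality just proved to the mollifications $u_\varepsilon:=\varepsilon^{-1}\int_0^\varepsilon T_ru\,dr\in D(A)$ --- using $\|u_\varepsilon\|_1\le 1$ and the monotonicity of $B$ --- noting that $u_\varepsilon\to u$ and $f(A)u_\varepsilon=\varepsilon^{-1}\int_0^\varepsilon T_rf(A)u\,dr\to f(A)u$ in $L^2$, and lets $\varepsilon\downarrow 0$, using the continuity of the Bernstein function $f$ (and, if $B$ fails to be continuous, its left-continuous modification).

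The step I expect to be the real obstacle is the core estimate for $\phi(0)-\phi(s)$: one must run the differential inequality carefully, pick out the crossover threshold $\phi(0)/2$, and choose the elementary bound precisely so that the resulting piecewise estimate recombines --- after integration against the L\'evy measure $\nu$ --- into a value of $f$ itself. It is also here that the non-symmetric loss appears: the factor $\tfrac14$ and the argument $2B(\cdot/2)$ in \eqref{th22}, as opposed to $\tfrac12$ and $B(\cdot/2)$ in \eqref{th12}, are precisely the price of replacing the self-adjoint identity $\langle u-T_su,u\rangle=\|u\|_2^2-\|T_{s/2}u\|_2^2$ by the one-sided bound $\Re\langle u-T_su,u\rangle\ge\tfrac12\big(\|u\|_2^2-\|T_su\|_2^2\big)$.
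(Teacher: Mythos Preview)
Your proof is correct and takes a genuinely different, more elementary route than the paper's.

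The paper proceeds via Proposition~\ref{p1}: from the differential inequality $\phi'(s)\le -2\phi(s)B(\phi(s))$ it extracts the full decay bound $\phi(s)\le G^{-1}(G(\phi(0))-s)$, then uses Cauchy--Schwarz and the algebraic identity $1-\sqrt{x}=(1-x)/(1+\sqrt{x})\ge (1-x)/2$ to arrive at
\[
g(r)=\tfrac12\int_0^r \nu\big(G(r)-G(u),\infty\big)\,du,
\]
which is finally converted into $\tfrac r4\,f(2B(r/2))$ by the mean-value estimate \eqref{e-convex} together with Lemma~\ref{l-okura}. By contrast, you bypass $G$, $G^{-1}$, and Lemma~\ref{l-okura} entirely: your two-case split at the threshold $\phi(0)/2$ yields the pointwise bound $\phi(0)-\phi(s)\ge\frac{\phi(0)}{2}(1-e^{-2s\beta})$ directly, and after the same Cauchy--Schwarz/Young step the integral against $\nu$ already \emph{is} $f(2\beta)$ by the L\'evy--Khintchine representation. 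You also keep the $a,b$ terms explicit rather than reducing to $a=b=0$. What the paper's longer route buys is a matching upper bound for $g(r)$ (displayed in the proof of Theorem~\ref{th1}) and a computation reusable for the $\varepsilon$-refinement in Remark~\ref{remark}\,(i); your argument, on the other hand, is shorter, self-contained, and makes the source of the constants $\tfrac14$ and $2$ in \eqref{th22} completely transparent. The closing density argument (mollification by $u_\varepsilon$) is at the same level of rigor as the paper's one-line appeal to $D(A)$ being a core.
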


\begin{remark} (i) The assumption that $T_t$ is a contraction both in $L^2(X,m)$ and $L^1(X,m)$ is often satisfied in concrete situations. Assume that $(T_t)_{t\geq 0}$ is a strongly continuous contraction semigroup on $L^2(X,m)$ such that the operators $T_t$ are symmetric and sub-Markovian---i.e.\ $0\leq T_tv\leq 1$\,\;a.e. for all $0\leq v\leq 1$\,\;$m$-a.e. Then the following argument shows that $T_t|_{L^2(X,m)\cap L^1(X,m)}$ is a contraction on $L^1(X,m)$:
$$
    \scalar{T_t u, v}
    = \scalar{u,T_t v}
    \leq \scalar{|u|,\nnorm v_\infty}
    = \nnorm v_\infty \nnorm{u}_1
    \quad u\in L^2\cap L^1,\; v\in L^1\cap L^\infty.
$$
In general, a sub-Markovian $L^2$-contraction operator $T_t$ is also an $L^1$-contraction if, and only if, the $L^2$-adjoint $T_t^*$ is a sub-Markovian operator, cf.\  \cite[Lemma 2]{SCH}.

\medskip\noindent
(ii) From \eqref{bfunction} it follows that Bernstein functions are subadditive, thus
$$
    \frac 12\,f(2x) \leq f(x).
$$
This shows that, for symmetric semigroups, \eqref{th12} implies \eqref{th22}.
\end{remark}

\bigskip
The remaining part of this paper is organized as follows. Section \ref{section2}
contains some preparations needed for the proof of Theorem \ref{th1} and \ref{th2},
in particular a one-to-one relation between Nash-type
inequalities and estimates for the decay of the semigroups.
These estimates are needed for the proof of Theorem \ref{th1}
and \ref{th2} in Section \ref{section3}. Section \ref{section4} contains several
applications of our main result, e.g.\ the super-Poincar\'e and weak Poincar\'e
inequality for subordinate semigroups and the hyper-, super- and ultracontractivity
of subordinate semigroups.

\section{Preliminaries}\label{section2}
In this section we collect a few auxiliary results for the proof of
Theorems \ref{th1} and \ref{th2}. \rene We begin with a differential
and integral inequality, which is a special case of \cite[Appendix
A, Lemma A.1, p.\ 193]{Chen}. Note that the right hand side of the
inequality \eqref{lemma11} below is negative. This is different from
the usual Gronwall-Bellman-Bihari inequality, see e.g.\
\cite[Section 3]{Bi}, but it is essential for our purposes. For the
sake of completeness, we include the short proof from \cite[Appendix
A, the comment before Remark A.3, p.\ 194]{Chen}.\normal

\begin{lemma}\label{lemma1}  Let $h:[0,\infty)\to (0,\infty)$ be a differentiable function. Suppose that there exists an increasing
function $\varphi:(0,\infty)\rightarrow(0,\infty)$ such that
\begin{equation}\label{lemma11}
    h'(t)\le -\varphi (h(t))\quad\text{for all\ \ } t\geq 0.
\end{equation}
Then, we have
$$
    h(t)\le G^{-1}\bigg(G(h(0))-t\bigg)\quad\text{for all\ \ } t\geq 0,
$$
where $G^{-1}$ is the (generalized) inverse of
$$
    G(t) =
    \begin{cases}
        \displaystyle\phantom{-}\int_1^t\frac{du}{\varphi(u)}, & \text{if\ \ } t\geq 1,\\[\bigskipamount]
        \displaystyle-\int_t^1\frac{du}{\varphi(u)}, & \text{if\ \ } t\leq 1.
    \end{cases}
$$
\end{lemma}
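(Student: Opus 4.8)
The plan is to show that $t\mapsto G(h(t))$ decreases at least at unit speed and then to apply the (generalized) inverse of $G$.

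First I would record that $\varphi(h(t))>0$ forces $h'(t)<0$, so $h$ is strictly decreasing; hence, for a fixed $t>0$, $h$ maps $[0,t]$ onto the compact interval $[h(t),h(0)]\subset(0,\infty)$, on which the increasing function $\varphi$ is bounded away from $0$ and from $\infty$. Consequently $1/\varphi$ is bounded there, every integral below is finite, and the two-case definition of $G$ unambiguously gives $G(h(0))-G(h(t))=\int_{h(t)}^{h(0)}\varphi(u)^{-1}\,du$, independently of where $h(0),h(t)$ lie relative to $1$. Next I rewrite \eqref{lemma11} as $-h'(r)/\varphi(h(r))\ge 1$ and integrate over $r\in[0,t]$. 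Since $h$ is monotone and differentiable at every point, its derivative is Lebesgue integrable on $[0,t]$ (with $\int_0^t|h'|\le h(0)-h(t)$), so $h$ is absolutely continuous there and the substitution $u=h(r)$ is licit; equivalently, $G\circ h$ is absolutely continuous on $[0,t]$ with $(G\circ h)'(r)=h'(r)/\varphi(h(r))\le -1$ for a.e.\ $r$. Integrating yields
\[
    t\le\int_0^t\frac{-h'(r)}{\varphi(h(r))}\,dr=\int_{h(t)}^{h(0)}\frac{du}{\varphi(u)}=G(h(0))-G(h(t)),
\]
that is, $G(h(t))\le G(h(0))-t$ for all $t\ge0$.

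To finish, note that $G'=1/\varphi>0$, so $G$ is strictly increasing on $(0,\infty)$; let $G^{-1}(s):=\sup\{u>0:\ G(u)\le s\}$ be its generalized inverse (with the usual conventions when $s$ falls outside the range of $G$). Applying the increasing map $G^{-1}$ to $G(h(t))\le G(h(0))-t$ gives $h(t)\le G^{-1}(G(h(0))-t)$, which is the assertion.

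I expect the only genuine subtlety to be the justification of the change of variables — equivalently, of the chain rule for $G\circ h$ — under the bare hypothesis that $h$ is differentiable rather than $C^1$; this is dispatched by the elementary remark that an everywhere differentiable monotone function is absolutely continuous on compact intervals, so that the fundamental theorem of calculus applies. A second, purely bookkeeping, point is to fix the meaning of $G^{-1}$ as a generalized inverse so that the final inequality remains meaningful when $G(h(0))-t$ leaves the range of $G$.
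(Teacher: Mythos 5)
Your argument is the same as the paper's: observe $h$ is strictly decreasing, divide the differential inequality by $\varphi(h(\cdot))$, integrate, and change variables to recognize $G(h(0))-G(h(t))$. The only difference is that you justify the change of variables carefully (everywhere-differentiable monotone $\Rightarrow$ absolutely continuous, so the fundamental theorem of calculus applies to $G\circ h$), a point the paper's short proof takes for granted; this is a legitimate refinement but not a different route.
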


\begin{proof}
    Since $h'(t)\le -\varphi(h(t))<0$ for all $t\ge0$, $h(t)$ is strictly decreasing on $[0,\infty)$. With the convention $\int_a^b = - \int_b^a$, we see for all $t\geq 0$ that
    \begin{align*}
    G(h(t))
    &= \int_1^{h(t)}\frac{1}{\varphi (u)}\,du\\
    &= G(h(0))+\int_{h(0)}^{h(t)}\frac{1}{\varphi (u)}\,du\\
    &= G(h(0))+\int_0^t \frac{h'(u)\,du}{\varphi(h(u))}\\
    &\le G(h(0))-t,
    \end{align*}
    and the claim follows.
\end{proof}

Let $(T_t)_{t\ge0}$ be a strongly continuous contraction semigroup of (not necessarily symmetric) operators on $L^2 = L^2(X,m)$. Denote by $(A,D(A)$ the infinitesimal generator. Since $\frac d{dt} T_tu = -T_t Au$ for all $u\in D(A)$, we have
$$
    \frac{d}{dt}\,\|T_tu\|_2^2
    = - 2 \Re \langle A \,T_tu,T_tu\rangle,\quad u\in D(A).
$$

\begin{proposition}\label{p1}
    Let $(T_t)_{t\geq 0}$ be a $C_0$ contraction semigroup on $L^2(X,m)$ and assume that each $T_t|_{L^2(X,m)\cap L^1(X,m)}$, $t\geq 0$, has an extension which is a contraction on $L^1(X,m)$, i.e.\ $\|T_tu\|_1\le \|u\|_1$ for all $u\in L^1(X,m)\cap L^2(X,m)$. Then the following Nash-type inequality
    \begin{equation}\label{th1122}
        \|u\|_2^2\,B\left( \|u\|_2^2\right)
        \le \Re\langle A\,u,u\rangle,
        \quad u\in D(A),\; \|u\|_1=1
    \end{equation}
    with some increasing function $B:(0,\infty)\rightarrow(0,\infty)$ holds if, and only if,
    \begin{equation}\label{pp1}
        \|T_t u\|_2^2\le G^{-1}\left(G(\|u\|_2^2)-t\right)
        \quad\text{for all $t\ge0$ and $u\in D(A), \; \|u\|_1=1$}
    \end{equation}
    where
    $$
    G(t) =
    \begin{cases}
        \displaystyle\phantom{-}\int_1^t\frac{ds}{2sB(s)}, & \text{if\ \ } t\geq 1,\\[\bigskipamount]
        \displaystyle-\int_t^1\frac{ds}{2sB(s)}, & \text{if\ \ } t\leq 1.
    \end{cases}
    $$
\end{proposition}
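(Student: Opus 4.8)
The plan is to reduce both implications to Lemma \ref{lemma1} applied to the function $h(t) := \|T_t u\|_2^2$ for a fixed $u \in D(A)$ with $\|u\|_1 = 1$. The first observation is that, since $T_t$ is an $L^1$-contraction, $\|T_t u\|_1 \le \|u\|_1 = 1$ for all $t \ge 0$, and also $T_t u \in D(A)$ with $\frac{d}{dt} T_t u = -A T_t u = -T_t A u$; hence $h$ is differentiable with
$$
    h'(t) = \frac{d}{dt}\|T_t u\|_2^2 = -2\Re\langle A\,T_t u, T_t u\rangle,
$$
as recorded just before the statement. The key technical point is that $B$ is increasing, so that $v \mapsto \|v\|_2^2\, B(\|v\|_2^2)$ is controlled correctly even though $\|T_t u\|_1 \le 1$ need not be an equality: if $\|T_t u\|_1 = c \le 1$, apply \eqref{th1122} to $w := T_t u / c$ (which has $\|w\|_1 = 1$) to get $\|w\|_2^2 B(\|w\|_2^2) \le \Re\langle A w, w\rangle$, i.e. $\|T_t u\|_2^2\, B(\|T_t u\|_2^2/c^2) \le \Re\langle A T_t u, T_t u\rangle$, and since $c \le 1$ and $B$ is increasing, $B(\|T_t u\|_2^2) \le B(\|T_t u\|_2^2/c^2)$, so $\|T_t u\|_2^2\, B(\|T_t u\|_2^2) \le \Re\langle A T_t u, T_t u\rangle$. (If $c = 0$ the inequality is trivial.) This is exactly where the $L^1$-contractivity hypothesis is used.

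For the direction \eqref{th1122} $\Rightarrow$ \eqref{pp1}: combining the displayed formula for $h'$ with the inequality just derived gives
$$
    h'(t) = -2\Re\langle A\,T_t u, T_t u\rangle \le -2\|T_t u\|_2^2\, B(\|T_t u\|_2^2) = -\varphi(h(t)),
$$
where $\varphi(s) := 2 s B(s)$ is increasing on $(0,\infty)$ because both $s \mapsto s$ and $B$ are. Lemma \ref{lemma1} (with this $\varphi$, whose associated $G$ is precisely the one in the statement) then yields $h(t) \le G^{-1}(G(h(0)) - t)$, which is \eqref{pp1}. One should note that $\varphi(h(t)) > 0$ forces $h$ to stay positive, and $h(t) = 0$ for some $t$ would only make \eqref{pp1} easier; so the hypothesis $h > 0$ of Lemma \ref{lemma1} is met on the relevant interval, or one argues on $[0,t_0)$ with $t_0$ the first zero of $h$.

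For the converse \eqref{pp1} $\Rightarrow$ \eqref{th1122}: differentiate the inequality $h(t) \le G^{-1}(G(h(0)) - t)$ at $t = 0$, where equality holds. The right-hand side $t \mapsto G^{-1}(G(h(0)) - t)$ has derivative at $t = 0$ equal to $-1/G'(h(0)) = -\varphi(h(0)) = -2 h(0) B(h(0))$ (using $G' = 1/\varphi$), so $h'(0) \le -2 h(0) B(h(0))$, i.e.
$$
    -2\Re\langle A u, u\rangle = h'(0) \le -2\|u\|_2^2\, B(\|u\|_2^2),
$$
which rearranges to \eqref{th1122}. The main obstacle here is justifying the differentiation — that $G^{-1}$ is differentiable at the relevant point and that one may pass the one-sided derivative through the inequality; this is routine since $G$ is a $C^1$ strictly monotone function with non-vanishing derivative $1/\varphi$ wherever $\varphi$ is finite and positive, so $G^{-1}$ is $C^1$ there, and the inequality $h(t) \le (\text{RHS})(t)$ with equality at $t=0$ gives $h'(0^+) \le (\text{RHS})'(0)$ directly. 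A minor point to watch is the case distinction in the definition of $G$ according to whether $\|u\|_2^2$ is $\ge 1$ or $\le 1$, but the formula $G' = 1/(2sB(s))$ holds in both branches, so the computation is uniform.
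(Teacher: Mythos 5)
Your proof is correct and follows essentially the same route as the paper's: the forward implication feeds $h(t)=\|T_tu\|_2^2$ into Lemma~\ref{lemma1} with $\varphi(s)=2sB(s)$, using $L^1$-contractivity together with the monotonicity of $B$ to pass from the normalized hypothesis to the needed bound on $\Re\langle AT_tu,T_tu\rangle$; the converse differentiates the decay estimate at $t=0$, where it is an equality, and uses $\bigl(G^{-1}\bigr)'=\varphi\circ G^{-1}$. The only stylistic difference is that the paper rewrites \eqref{th1122} in the scale-invariant form $\|u\|_2^2\,B(\|u\|_2^2/\|u\|_1^2)\le\Re\langle Au,u\rangle$ and applies it directly to $T_tu$, whereas you normalize $T_tu$ by $\|T_tu\|_1$ first — these are the same computation. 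Your explicit handling of the edge case $\|T_tu\|_1=0$ and of the one-sided differentiation is a welcome extra, but not a departure.
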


\begin{proof}
    Assume that \eqref{th1122} holds. Then,
    \begin{equation*}\label{th13}
        \|u\|_2^2\,B\left( \frac{\|u\|_2^2}{\|u\|_1^2}\right)
        \le \Re\langle A\,u,u\rangle,\quad u\in D(A).
    \end{equation*}
    For all $u\in D(A)$ with $\|u\|_1=1$ we have
    $$
        \frac{d}{dt}\,\|T_tu\|_2^2
        =-2\Re\langle A \,T_tu,T_tu\rangle
        \le
        -2\,\|T_tu\|_2^2\,B\left(\frac{\|T_tu\|_2^2}{\|T_tu\|_1^2}\right),
    $$
    Since the function $B$ is increasing and $\|T_tu\|_1\le \|u\|_1=1$, we have
    $$
        \frac{d}{dt}\,\|T_t u\|_2^2
        \le  -2\,\|T_tu\|_2^2\,B\left( {\|T_tu\|_2^2}\right).
    $$
    This, together with Lemma \ref{lemma1}, proves \eqref{pp1}.

    For the converse we assume that \eqref{pp1} holds. Then, for all $u\in D(A)$ with $\|u\|_1=1$,
    \begin{align*}
        \Re\langle Au,u\rangle
        &=-\frac{1}{2}\,\frac{d}{dt}\,\|T_t u\|_2^2\,\bigg|_{t=0}\\
        &=\frac{1}{2}\,\lim_{t\to0}\frac{\|u\|_2^2-\|T_t u\|_2^2}{t}\\
        &\ge\frac{1}{2}\,\lim_{t\to0}\frac{\|u\|_2^2-G^{-1}\left(G(\|u\|_2^2)-t\right)}{t}\\
        &=-\frac{1}{2}\,\frac{d}{dt}\,G^{-1}\left(G(\|u\|_2^2)-t\right)\,\bigg|_{t=0}\\
        &=\left[G^{-1}\left(G(\|u\|_2^2)-t\right)\cdot B\Big(G^{-1}\big(G(\|u\|_2^2)-t\big)\Big)\right]\,\bigg|_{t=0}\\
        &=\|u\|_2^2\,B(\|u\|_2^2),
    \end{align*}
    which is just the Nash-type inequality \eqref{th1122}.
\end{proof}

Finally we need some elementary estimate for Bernstein functions.
\begin{lemma}\label{l-okura}
    Let $f$ be a Bernstein function given by \eqref{bfunction} where $a=b=0$ and with representing measure $\nu$. Set
    $$
        \nu_1(x) := \int_0^x \nu\big(s,\infty\big)\,ds.
    $$
    Then for $x>0$,
    $$
        \frac{e-1}{e}\, x\,\nu_1\Big(\frac 1x\Big) \leq f(x) \leq x\,\nu_1\Big(\frac 1x\Big).
    $$
\end{lemma}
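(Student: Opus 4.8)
The plan is to estimate the integral $f(x) = \int_{(0,\infty)} (1-e^{-xt})\,\nu(dt)$ by splitting the range of integration at the point $t = 1/x$, which matches the scale appearing in $\nu_1(1/x)$. First I would rewrite $\nu_1(1/x)$ using Tonelli's theorem: since $\nu_1(1/x) = \int_0^{1/x} \nu(s,\infty)\,ds = \int_0^{1/x}\!\!\int_{(s,\infty)} \nu(dt)\,ds = \int_{(0,\infty)} \big(t\wedge \tfrac1x\big)\,\nu(dt)$, so that $x\,\nu_1(1/x) = \int_{(0,\infty)} (xt \wedge 1)\,\nu(dt)$. Thus the lemma reduces to the pointwise-in-$t$ comparison of $1 - e^{-xt}$ with $xt\wedge 1$, integrated against $\nu$.

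For the upper bound, I would use the elementary inequality $1 - e^{-r} \le r\wedge 1$ for all $r \ge 0$ (it is $\le r$ by convexity/the tangent line at $0$, and $\le 1$ trivially); applying this with $r = xt$ and integrating against $\nu(dt)$ gives $f(x) \le x\,\nu_1(1/x)$ immediately. For the lower bound, the key is $1 - e^{-r} \ge \tfrac{e-1}{e}(r\wedge 1)$: on $r\ge 1$ this reads $1 - e^{-r} \ge 1 - e^{-1} = \tfrac{e-1}{e}$, which holds since $e^{-r}$ is decreasing; on $0 \le r \le 1$ it reads $1 - e^{-r} \ge \tfrac{e-1}{e}\,r$, which holds because $g(r) := 1 - e^{-r} - \tfrac{e-1}{e}r$ satisfies $g(0) = 0$, $g(1) = 0$, and $g$ is concave, hence nonnegative on $[0,1]$. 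Integrating this lower bound with $r = xt$ against $\nu(dt)$ yields $f(x) \ge \tfrac{e-1}{e}\,x\,\nu_1(1/x)$.

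There is no serious obstacle here; the only thing to be a little careful about is the Tonelli interchange (legitimate because the integrand $\I_{\{s < t\}}$ is nonnegative and measurable) and the fact that all quantities are finite — which follows from the defining integrability condition $\int_{(0,\infty)} (1\wedge t)\,\nu(dt) < \infty$ on the Lévy measure, so that $x\,\nu_1(1/x) = \int (xt\wedge 1)\,\nu(dt) \le \max(x,1)\int (t\wedge 1)\,\nu(dt) < \infty$. The whole argument is just two one-variable inequalities $\tfrac{e-1}{e}(r\wedge 1) \le 1 - e^{-r} \le r\wedge 1$ plus an application of Tonelli, so I would present it in a few lines.
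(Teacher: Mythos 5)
Your proof is correct and follows the same route as the paper: both rewrite $x\,\nu_1(1/x)=\int_{(0,\infty)}(xt\wedge 1)\,\nu(dt)$ via Fubini/Tonelli and then apply the two-sided elementary bound $\tfrac{e-1}{e}(r\wedge 1)\le 1-e^{-r}\le r\wedge 1$ pointwise. You additionally spell out the proof of that elementary inequality and the finiteness check, which the paper leaves implicit.
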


\begin{proof}
    By Fubini's theorem we find
    \begin{align*}
        x\,\nu_1\Big(\frac 1x\Big)
        = x\int_0^{1/x} \nu(s,\infty)\,ds
        &= \int_0^1 \nu\Big( \frac tx,\infty\Big)\,dt\\
        &= \int_0^1 \int_{t/x}^\infty \nu(dy)\,dt\\
        &= \int_0^\infty (xy\wedge 1)\,\nu(dy),
    \end{align*}
    see also \^Okura \cite[(1.5)]{OK}. Using the following elementary inequalities
    $$
        \frac{e-1}{e}(1\wedge r)
        \le 1-e^{-r}\le 1\wedge r
        \quad\text{for\ \ } r\ge0,
    $$
    we conclude
    $$
        \frac{e-1}e\, x\,\nu_1\Big(\frac 1x\Big)
        = \int_0^\infty\frac{e-1}e \, (xy\wedge 1)\,\nu(dy)
        \leq \int_0^\infty (1-e^{-xy})\,\nu(dy)
        = f(x).
    $$
    The upper bound follows similarly.
\end{proof}

\section{Proof of the main theorems}\label{section3}

\begin{proof}[\upshape\bfseries Proof of Theorem \ref{th1}]
Since $D(A)$ is an operator core for $(f(A),D(f(A))$, it is enough to prove \eqref{th12} for $u\in D(A)$. Using Phillip's formula \eqref{e-phillips} we find for all $u\in D(A)$
\begin{align*}
    \langle f(A)\, u,u\rangle
    &=a\,\|u\|_2^2 + b\,\langle A\,u, u\rangle+\int_{(0,\infty)} \langle u-T_su,u\rangle\,\nu(ds).
\end{align*}
This formula and the representation \eqref{bfunction} for $f$ show that we may, without loss of generality, assume that $a=b=0$.

Assume that \eqref{th11} holds.  Proposition \ref{p1} shows for $t\ge0$ and $u\in D(A)$ with $\|u\|_1=1$,
    $$
        \frac{\langle T_tu,u\rangle}{\|u\|_2^2}
        = \frac{\|T_{t/2}u\|_2^2}{\|u\|_2^2}
        \le \frac{G^{-1}\big(G(\|u\|_2^2)- t/2\big)}{\|u\|_2^2}.
    $$
    Then,
    \begin{align*}
    \langle f(A)\,u,u\rangle
    &= \int_{(0,\infty)} \langle u-T_su,u\rangle\, \nu(ds)\\
    &= \|u\|_2^2\, \int_{(0,\infty)} \left(1-\frac{\langle T_su,u\rangle}{\|u\|_2^2}\right)\nu(ds)\\
    &\geq \int_{(0,\infty)}\Big(\|u\|_2^2-{{G^{-1}\Big(G(\|u\|_2^2)- \frac s2\Big)}}\Big)\,\nu(ds)\\
    &= g(\|u\|_2^2),
    \end{align*}
    where
    \begin{align*}
    g(r)
    &=\int_{(0,\infty)} \Big(r-{G^{-1}\Big(G(r)- \frac s2\Big)}\Big)\,\nu(ds).
    \end{align*}
    Furthermore, for all $r>0$,
    \begin{align*}
    g(r)
    &=  \int_{(0,\infty)} \Big(r-{G^{-1}\Big(G(r)-\frac s2\Big)}\Big)\,\nu(ds)\\
    &= \int_{(0,\infty)} \left(\int_{G(r)-s/2}^{G(r)}dG^{-1}(u)\right)\nu(ds)\\
    &= \int_{-\infty}^{G(r)} \nu\Big(2\big(G(r)-u,\infty\big)\Big)\,dG^{-1}(u)\\
    &= \int_0^r \nu\Big(2\big(G(r)-G(u)\big),\infty\Big)\,du.
    \end{align*}
    For the last equality we used that $B$ is increasing, $G(x)>-\infty$ for all $x>0$ and $G(0)=-\infty$; this follows from
    $$
        G(0)
        = -\int_0^1\frac{du}{u B(u)}
        \le \frac{-1}{B(1)}\int_0^1\frac{du}{u}
        =-\infty.
    $$

    Using again the monotonicity of $B$, we find from the mean value theorem
    \begin{equation}\label{e-convex}
        \frac{1}{2uB(u)}\ge \frac{G(r)-G(u)}{r-u}\ge \frac{1}{2rB(r)}
        \quad
        \text{for all}
        \quad
        0<u<r.
    \end{equation}
    Therefore,
    \begin{align}
    g(r)
    &\ge\notag \int_0^r \nu\left(\frac{1}{uB(u)}(r-u),\infty\right) du\\
    &\ge\label{e-epsilon} \int_{r/2}^r \nu\left(\frac{1}{uB(u)}(r-u),\infty\right)du\\
    &\ge\notag \int_0^{r/2} \nu\left(\frac{2v}{rB(r/2)},\infty\right)dv\\
    &=\notag \frac 12\, rB(r/2) \int_0^{1/B(r/2)} \nu\big(s,\infty\big)\,ds.
    \end{align}

    A similar calculation, now using the lower bound in \eqref{e-convex}, yields
    $$
        g(r)
        \leq rB(r) \int_0^{1/B(r)} \nu\big(s,\infty\big)\,ds.
    $$
    Now we can use Lemma \ref{l-okura} to deduce that
    $$
        \frac{e}{e-1}\,rf(B(r))\ge g(r)\ge \frac r2\,f\left(B\left(\frac r2\right)\right)
        \quad\text{for all}\quad r>0,
    $$
    and the proof is complete. 
    \end{proof}
\begin{remark}\label{remark}
(i) In the proof of Theorem \ref{th1}, at the line \eqref{e-epsilon}, we can replace  $r/2$ by $\varepsilon r$ for $\varepsilon \in (0,1)$. Then we get
\begin{align*}
    g(r)
    &\ge \sup_{\varepsilon\in (0,1)}\bigg[(1-\varepsilon)\,r\,f\bigg( \frac{\varepsilon B(\varepsilon r)}{1-\varepsilon}\bigg)\bigg],
\end{align*}
    which shows that we can improve \eqref{th12} by
    $$
        \sup_{\varepsilon\in (0,1)}\bigg[(1-\varepsilon)\,\|u\|_2^2\,f\bigg( \frac{\varepsilon B(\varepsilon \|u\|_2^2)}{1-\varepsilon}\bigg)\bigg]
        \le \langle f(A)\,u,u\rangle,
        \quad u\in D(f(A)),\; \|u\|_1=1.
    $$

\medskip\noindent
(ii) A close inspection of our proof shows that Theorem \ref{th1} remains valid if we replace the norming
condition $\|u\|_1=1$ in \eqref{th11} and \eqref{th21} by the more general condition $\Phi(u)=1$. Here
$\Phi : L^2(X,m) \to [0,\infty]$ is a measurable functional satisfying $\Phi(cu)=c^2\Phi(u)$ and $\Phi(T_tu)\le \Phi(u)$ for all $t\geq 0$.
\end{remark}

\begin{proof}[\upshape\bfseries Proof of Theorem \ref{th2}]
The proof of Theorem \ref{th2} is similar to the proof of Theorem \ref{th1}. Therefore we only outline the differences in the arguments. As before we can assume that $f(\lambda)=\int_{(0,\infty)}\big(1-e^{-t\lambda}\big)\,\nu(dt)$. Moreover, it is enough to verify \eqref{th22} for all $u\in D(A)$. Since $(T_t)_{t\ge0}$ is a contraction on $L^1(X,m)\cap L^2(X,m)$, we see from \eqref{th21} and Proposition \ref{p1} that for all $t\ge0$ and $u\in D(A)$ with $\|u\|_1=1$,
$$
    \|T_t u\|_2^2\le G^{-1}\left(G(\|u\|_2^2)-t\right).
$$
By the Cauchy-Schwarz inequality,
$$
    \frac{\Re \langle T_tu,u\rangle}{\|u\|_2^2}
    \le \frac{|\langle T_tu,u\rangle|}{\|u\|_2^2}
    \le \frac{\|T_tu\|_2\|u\|_2}{\|u\|_2^2}
    \le \frac{\sqrt{G^{-1}\Big(G(\|u\|_2^2)-t\Big)}}{\|u\|_2}.
$$
Using \eqref{e-phillips} yields that for any $u\in D(A)$ with $\|u\|_1=1$,
\begin{align*}
    \Re\langle f(A)\,u,u\rangle
    &= \int_{(0,\infty)} \Re \langle u-T_su,u\rangle\, \nu(ds)\\
    &= \|u\|_2^2\, \int_{(0,\infty)} \left(1-\frac{\Re \langle T_su,u\rangle}{\|u\|_2^2}\right)\nu(ds)\\
    &\ge \|u\|_2^2\,\int_{(0,\infty)} \left(1-\dfrac{\sqrt{G^{-1}(G(\|u\|_2^2)-s)}}{\|u\|_2}\right)\nu(ds)\\
    &= \|u\|_2^2\,\int_{(0,\infty)}
        \left(\dfrac{1-\dfrac{{G^{-1}(G(\|u\|_2^2)-s)}}{\|u\|^2_2}}%
                    {1+\dfrac{\sqrt{G^{-1}(G(\|u\|_2^2)-s)}}{\|u\|_2}}
        \right) \nu(ds)\\
    &\ge \frac{\|u\|_2^2}{2} \int_{(0,\infty)}\left(1-\dfrac{{G^{-1}\big(G(\|u\|_2^2)-s\big)}}{\|u\|^2_2}\right) \nu(ds)\\
    &= g(\|u\|_2^2),
\end{align*} where $$g(r)
    = \frac{r}{2}\int_{(0,\infty)} \left(1-\frac{{G^{-1}(G(r)-s)}}{r}\right) \nu(ds).$$
A similar calculation as in the proof of Theorem \ref{th1} shows
\begin{align*}
    g(r)
    = \frac{1}{2} \int_0^r \nu\big(G(r)-G(u),\infty\big)\,du
    \ge\frac{r}{4}\,f\Big(2\,B\Big(\frac{r}{2}\Big)\Big),
\end{align*}
which is exactly \eqref{th22}.
\end{proof}

\section{Applications}\label{section4}
We will now give some applications of our results. Throughout this section we retain the notations introduced in the previous sections. In particular,  $(T_t)_{t\geq 0}$ will be a strongly continuous contraction semigroup on $L^2(X,m)$ with generator $(A,D(A))$. We assume that $\|T_t u\|_1 \leq \|u\|_1$ for all $u\in L^2(X,m)\cap L^1(X,m)$ and, for simplicity, that the operators $T_t$, $t\geq 0$, are symmetric. By $\Phi: L^2(X,m)\rightarrow [0,\infty]$ we denote a functional on $L^2(X,m)$ such that for all $c,t>0$ and $u\in L^2(X,m)$
$$
        \Phi(cu)=c^2\Phi(u)
        \quad\text{and}\quad
        \Phi\,(T_tu)\le \Phi(u);
$$
by $f$ we always denote a Bernstein function given by \eqref{bfunction}.

\subsection{Subordinate super-Poincar\'e inequalities}\label{subsec-super-poincare}
In this section, we study the analogue of Theorem \ref{th1} for super-Poincar\'e inequalities. For details on super-Poincar\'e inequalities and their applications we refer to \cite{wang1, wang11, wang2} or \cite[Chapter 3]{Wang}.
\begin{proposition}\label{super} Assume that $(A,D(A))$ satisfies the following super-Poincar\'e inequality:
    \begin{equation}\label{th31}
        \|u\|_2^2\le r\, \langle A\,u,u\rangle+ \beta(r)\,\Phi (u),
        \quad r>0,\; u\in D(A),
    \end{equation}
    where $\beta:(0,\infty)\rightarrow(0,\infty)$ is a decreasing function such that $\lim_{r\to 0}\beta(r)=\infty$ and $\lim_{r\to\infty}\beta(r)=0$; moreover, we set $\beta(0):=\infty$. Then the generator $f(A)$ of the subordinate semigroup also satisfies a super-Poincar\'e inequality
    \begin{equation}\label{th32}
    \|u\|_2^2\le r\, \langle f(A)\,u,u\rangle+ {\beta}_f(r)\,\Phi (u),
        \quad r>0,\; u\in D(f(A)),
    \end{equation}
    where
    $$
        {\beta}_f(r)= 4\beta\bigg(\frac{1}{2f^{-1}(2/r)}\bigg)
    $$
\end{proposition}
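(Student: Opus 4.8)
The strategy is to translate the super-Poincar\'e inequality \eqref{th31} into a decay estimate for the semigroup $(T_t)_{t\ge0}$, push that estimate through Phillip's formula \eqref{e-phillips} exactly as in the proof of Theorem \ref{th1}, and then translate back. The first step is standard: the super-Poincar\'e inequality \eqref{th31} is equivalent to an estimate of the form
$$
    \|T_tu\|_2^2 \le \xi(t)\,\Phi(u)\quad\text{for all } t>0,\; u\in D(A),
$$
where $\xi$ is a decreasing function built from $\beta$; concretely, one optimises \eqref{th31} along the flow $t\mapsto\|T_tu\|_2^2$, using $\frac{d}{dt}\|T_tu\|_2^2 = -2\langle AT_tu,T_tu\rangle$ together with $\Phi(T_tu)\le\Phi(u)$, to obtain a differential inequality which integrates to the displayed bound. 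The precise form of $\xi$ in terms of $\beta$ is classical (see \cite{wang1} or \cite[Chapter 3]{Wang}); since the final answer is expressed through $f^{-1}$, the homogeneity $\Phi(cu)=c^2\Phi(u)$ lets us assume $\Phi(u)=1$ throughout, just as $\|u\|_1=1$ was assumed in Theorem \ref{th1}.

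Next, again assuming without loss of generality that $a=b=0$ in \eqref{bfunction} (Phillip's formula handles the $a$- and $b$-terms trivially, as in the proof of Theorem \ref{th1}), I would write, for $u\in D(A)$ with $\Phi(u)=1$,
$$
    \langle f(A)u,u\rangle
    = \|u\|_2^2\int_{(0,\infty)}\left(1-\frac{\langle T_su,u\rangle}{\|u\|_2^2}\right)\nu(ds)
    \ge \int_{(0,\infty)}\left(\|u\|_2^2 - \|T_{s/2}u\|_2^2\right)\nu(ds),
$$
using symmetry and the semigroup property $\langle T_su,u\rangle = \|T_{s/2}u\|_2^2$. Inserting the decay bound $\|T_{s/2}u\|_2^2\le\xi(s/2)$ gives a lower bound of the form $\int_{(0,\infty)}(\|u\|_2^2-\xi(s/2))\,\nu(ds)$; the integrand is non-negative only for small $s$, so one truncates the integral and estimates $\nu(s,\infty)$ using Lemma \ref{l-okura}, exactly as in the passage \eqref{e-epsilon} of Theorem \ref{th1}. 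This yields a bound of the shape $\|u\|_2^2 \le r\langle f(A)u,u\rangle + \beta_f(r)$ after rearranging and choosing the free parameter; the factor $4$ and the argument $\tfrac1{2f^{-1}(2/r)}$ in $\beta_f$ come out of matching the truncation level against the inverse Bernstein function, precisely the same bookkeeping that produced the constant $1/2$ in \eqref{th12}.

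The main obstacle is the same place where the constants in Theorem \ref{th1} were lost: controlling the tail integral $\int \nu(\cdot,\infty)\,du$ after truncation and converting it back into $f$ via Lemma \ref{l-okura}. One must identify the right cut-off — here it is governed by the level at which $\xi(s/2)$ drops below $\|u\|_2^2$, which in turn is where $s/2 \approx 1/(2f^{-1}(2/r))$ — and then check that the resulting expression is genuinely of super-Poincar\'e type, i.e.\ that $\beta_f$ is decreasing with $\beta_f(r)\to\infty$ as $r\to0$ and $\beta_f(r)\to0$ as $r\to\infty$; this follows from the corresponding properties of $\beta$ together with the fact that $f^{-1}$ is increasing with $f^{-1}(0+)=0$ and $f^{-1}(\infty)=\infty$ (the latter may require $\nu((0,\infty))=\infty$, i.e.\ $f$ unbounded, otherwise $\beta_f$ should be interpreted with the convention $\beta(0)=\infty$). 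A secondary point is the reduction to $D(A)$: since $D(A)$ is an operator core for $f(A)$ and both sides of \eqref{th32} are continuous in the graph norm of $f(A)$, the inequality extends from $D(A)$ to all of $D(f(A))$, as noted at the start of the proof of Theorem \ref{th1}.
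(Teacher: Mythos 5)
Your plan is a genuinely different route from the paper's. The paper does \emph{not} pass through a semigroup decay estimate at the level of Proposition~\ref{super}; instead it performs a Legendre-type transform $B(x)=\sup_{s>0}\frac{1-\beta(s)/x}{s}$ to convert the super-Poincar\'e inequality~\eqref{th31} into a Nash-type inequality with functional normalisation $\Phi(u)=1$, invokes Theorem~\ref{th1} (via Remark~\ref{remark}(ii)) \emph{as a black box} to obtain a Nash-type inequality for $f(A)$ with rate $\Theta(x)=\tfrac{x}{2}f(B(x/2))$, and then transforms back to super-Poincar\'e form via $\widetilde\beta(r)=\sup_{s>0}\{\Theta^{-1}(s)-rs\}$. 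The two-sided estimate $\frac{1}{2\beta^{-1}(x/2)}\le B(x)\le\frac{1}{\beta^{-1}(x)}$ and a careful inversion of $\Theta_0(x)=\tfrac{x}{2}f\bigl(\tfrac{1}{2\beta^{-1}(x/4)}\bigr)$ are what produce the explicit constant $4$ and the argument $\tfrac{1}{2f^{-1}(2/r)}$. What the paper's approach buys is exactly this reuse: all the analytical work involving Phillip's formula and Lemma~\ref{l-okura} is done once, in Theorem~\ref{th1}, and Proposition~\ref{super} becomes a purely algebraic bookkeeping of Legendre transforms.

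There is, however, a genuine gap in your sketch at the step ``truncate and apply Lemma~\ref{l-okura} exactly as in the passage~\eqref{e-epsilon}''. In the proof of Theorem~\ref{th1} the decay estimate used is the initial-value-dependent Gronwall bound $\|T_tu\|_2^2\le G^{-1}(G(\|u\|_2^2)-t)$ from Proposition~\ref{p1}, and the key identity after Fubini is $g(r)=\int_0^r\nu\bigl(2(G(r)-G(u)),\infty\bigr)\,du$; the mean-value estimate~\eqref{e-convex} then makes the argument of $\nu(\cdot,\infty)$ essentially \emph{linear} in $r-u$, which after a change of variables yields $\int_0^{1/B(r/2)}\nu(s,\infty)\,ds=\nu_1(1/B(r/2))$ --- precisely the quantity Lemma~\ref{l-okura} relates to $f$. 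If you instead insert a uniform decay $\|T_{s/2}u\|_2^2\le\xi(s/2)$ with $\xi$ independent of $\|u\|_2^2$, a Fubini argument gives $\int_0^r\nu\bigl(2\xi^{-1}(v),\infty\bigr)\,dv$, and since $\xi^{-1}(v)$ is not linear in $v$ there is no change of variables turning this into an expression of the form $\nu_1(\cdot)$. Truncating as you suggest gives only a term of the shape $\tfrac r2\,\nu(t_0,\infty)$ for some fixed $t_0$, and a single value $\nu(t_0,\infty)$ has no two-sided comparison with $f$ --- Lemma~\ref{l-okura} controls the \emph{average} $\tfrac{1}{t_0}\int_0^{t_0}\nu(s,\infty)\,ds$, not the tail at a point. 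To repair this you would have to pass, as Proposition~\ref{p1} does, through the Nash-type form and the Gronwall-type decay; but then you are inlining the paper's proof rather than replacing it. (Two smaller points: with $\xi$ decreasing the integrand $\|u\|_2^2-\xi(s/2)$ is non-negative for \emph{large} $s$, not small $s$ as you wrote; and the ``classical'' form of $\xi$ in terms of $\beta$ has to be made explicit, since the exact constants $4$ and $2$ in $\beta_f$ depend on it.)
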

    \begin{proof}
        We can rewrite \eqref{th31} for any $u\in D(A)$ with $\Phi(u)=1$ in the following form:
        $$
            \|u\|_2^2\,B(\|u\|_2^2)\le \langle A\,u,u\rangle,
        $$
        where
        $$
            B(x)=\sup_{s>0}\frac{1-\beta(s)/x}{s}.
        $$
        Clearly, $B(x)$ is an increasing function on $(0,\infty)$. Since $\beta^{-1}:(0,\infty)\rightarrow(0,\infty)$, we see from
       \begin{equation}\label{proofth31}
            \frac{1}{2\beta^{-1}\big(x/2\big)}
            =\frac{1-\beta\big(\beta^{-1}(x/2)\big)/x}{\beta^{-1}(x/2)}\le B(x)
            =\sup_{s\ge \beta^{-1}(x)}\frac{1-\beta(s)/x}{s}
            \le\frac{1}{\beta^{-1}(x)}
        \end{equation}
        that $B:(0,\infty)\rightarrow(0,\infty)$.

        Using Theorem \ref{th1} and the Remark \ref{remark} (ii) yields for any $u\in D(f(A))$ with $\Phi(u)=1$,
        $$\Theta(\|u\|^2_2)\le\langle f(A)\,u,u\rangle,$$ where $$\Theta(x)=\frac{x}{2}\,f\Big(B\Big(\frac{x}{2}\Big)\Big)=\frac{x}{2}\,\sup_{s>0}\,f\bigg(\frac{1-2\beta(s)/x}{s}\bigg).$$ For $r>0$, define
$$
    \widetilde{\beta}(r)
    =\sup_{s>0}\Big\{\Theta^{-1}(s)-rs\Big\}.
$$
Then,
\begin{equation}\label{proofth32}
    \|u\|_2^2
    \le r\, \langle f(A)\,u,u\rangle + \widetilde{\beta}(r)\,\Phi (u),
        \quad r>0,\; u\in D(f(A)).
\end{equation}

Next, we will estimate $\widetilde{\beta}(r)$. By \eqref{proofth31},
$$
    \Theta(x)\ge\frac{x}{2}\,f\bigg(\frac{1}{2\,\beta^{-1}(x/4)}\bigg):=\Theta_0(x),
$$
which in turn implies that
$$
    \Theta^{-1}(x)\le \Theta_0^{-1}(x).
$$
By the definition of $\Theta_0(x)$, $\Theta_0:(0,\infty)\rightarrow(0,\infty)$ is a strictly increasing function such that $\lim_{x\to 0}\Theta_0(x)=0$ and $\lim_{x\to\infty}\Theta_0(x)=\infty$, and so
\begin{equation}\label{proofth33}
    \Theta_0^{-1}(x)
    = {2x}\left[{f\left(\frac{1}{2\,\beta^{-1}(\Theta_0^{-1}(x)/4)}\right)}\right]^{-1}.
\end{equation}
On the other hand,
\begin{equation*}\label{proofth34}
   \widetilde{\beta}(r)
   \le\sup_{s>0}\Big\{\Theta_0^{-1}(s)-rs\Big\}
   =\sup_{s>0,\,\Theta_0^{-1}(s)\ge rs}\Theta_0^{-1}(s).
\end{equation*}
From \eqref{proofth33} we see that $\Theta_0^{-1}(s)\ge rs$ is equivalent to
$$
    \frac{1}{2 f^{-1}(2/r)}
    \le \beta^{-1}\left(\frac{\Theta_0^{-1}(s)}{4}\right).
$$
Since $\beta$ is decreasing, we can rewrite this as
$$
    \Theta_0^{-1}(s)
    \le 4\beta\left(\frac{1}{2\,f^{-1}(2/r)}\right),
$$
and so
\begin{equation}\label{proofth34}
   \widetilde{\beta}(r)
   \le\sup_{s>0,\,\Theta_0^{-1}(s)\le 4\beta\big(\frac{1}{2\,f^{-1}(2/r)}\big)} \Theta_0^{-1}(s)
   \le 4\beta\left(\frac{1}{2\,f^{-1}(2/r)}\right).
\end{equation}
The proof is complete if we combine \eqref{proofth32} and \eqref{proofth34}.
\end{proof}

\subsection{Subordinate weak Poincar\'e inequalities}\label{subsec-weak-poincare}
We can also consider the subordination for weak Poincar\'{e} inequalities; for details we refer to \cite{RW} or
\cite[Chapter 4]{Wang}.
\begin{proposition}\label{weak}
    Assume that $(A,D(A))$ satisfies the following weak Poincar\'e inequality:
    \begin{equation}\label{th312}
        \|u\|_2^2\le \alpha(r)\, \langle A\,u,u\rangle+ r\,\Phi (u),
        \quad r>0,\; u\in D(A),
    \end{equation}
    where $\alpha:(0,\infty)\rightarrow(0,\infty)$ is a decreasing function. Then the generator $f(A)$ of the subordinate semigroup also satisfies a weak Poincar\'e inequality
   \begin{equation}\label{th322}
    \|u\|_2^2\le {\alpha}_f(r)\, \langle f(A)\,u,u\rangle+ r\,\Phi (u),
        \quad r>0,\; u\in D(f(A)),
    \end{equation}
    where
    $$
        {\alpha}_f(r)
        ={2}\bigg/\left[f\left(\frac{1}{2\,\alpha(r/4)}\right)\right].
    $$
\end{proposition}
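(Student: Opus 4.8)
The plan is to follow the same strategy as in Proposition \ref{super}: rewrite the weak Poincar\'e inequality for $A$ as a Nash-type inequality with an explicit increasing function $B$, apply Theorem \ref{th1} (together with Remark \ref{remark}(ii), which allows the norming functional $\Phi$), and then convert the resulting Nash-type inequality for $f(A)$ back into a weak Poincar\'e inequality. First I would restrict to $u\in D(A)$ with $\Phi(u)=1$; then \eqref{th312} reads $\|u\|_2^2 \le \alpha(r)\langle Au,u\rangle + r$ for all $r>0$, i.e.\ $\langle Au,u\rangle \ge \sup_{r>0}\frac{\|u\|_2^2 - r}{\alpha(r)}$. Setting
$$
    B(x) = \sup_{r>0} \frac{x-r}{\alpha(x)\,\alpha(r)/\alpha(x)} \quad\text{— more precisely}\quad B(x) = \frac{1}{x}\sup_{0<r<x}\frac{x-r}{\alpha(r)},
$$
one checks that $\|u\|_2^2\,B(\|u\|_2^2) \le \langle Au,u\rangle$ and that $B$ is increasing on $(0,\infty)$ with values in $(0,\infty)$ (the positivity uses that $\alpha$ is decreasing, so $\alpha(r)<\infty$ for $r>0$, and the restriction $r<x$ keeps the numerator positive). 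A convenient one-sided bound, obtained by choosing $r = x/2$ in the supremum, is $B(x) \ge \frac{1}{2\alpha(x/2)}$.

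Next I would apply Theorem \ref{th1} in the form of Remark \ref{remark}(ii): for $u\in D(f(A))$ with $\Phi(u)=1$,
$$
    \frac{\|u\|_2^2}{2}\,f\!\left(B\!\left(\frac{\|u\|_2^2}{2}\right)\right) \le \langle f(A)u,u\rangle.
$$
Using the lower bound $B(x)\ge \tfrac{1}{2\alpha(x/2)}$ with $x = \|u\|_2^2/2$ and the monotonicity of $f$, this gives $\Theta_0(\|u\|_2^2) \le \langle f(A)u,u\rangle$ where $\Theta_0(x) = \frac{x}{2}\,f\!\big(\tfrac{1}{2\alpha(x/4)}\big)$. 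Finally, to extract a weak Poincar\'e inequality of the form \eqref{th322}, I would argue pointwise in $x=\|u\|_2^2$: fix $r>0$ and show that for every $x>0$,
$$
    x \le \alpha_f(r)\,\Theta_0(x) + r, \qquad \alpha_f(r) = \frac{2}{f\big(\tfrac{1}{2\alpha(r/4)}\big)}.
$$
Indeed, if $x\le r$ the inequality is trivial since $\Theta_0(x)\ge 0$; if $x>r$, then $\alpha(x/4)\le \alpha(r/4)$ (as $\alpha$ is decreasing), so $f\big(\tfrac{1}{2\alpha(x/4)}\big)\ge f\big(\tfrac{1}{2\alpha(r/4)}\big)$, hence $\alpha_f(r)\,\Theta_0(x) \ge \frac{2}{f(1/(2\alpha(r/4)))}\cdot\frac{x}{2}f\big(\tfrac{1}{2\alpha(r/4)}\big) = x \ge x - r$. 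Homogenizing ($\Phi(cu)=c^2\Phi(u)$, so $u/\Phi(u)^{1/2}$ handles general $u$) yields \eqref{th322}.

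The main obstacle, as in Proposition \ref{super}, is the bookkeeping in the inverse-function manipulations: one must verify that $B$ is genuinely $(0,\infty)$-valued and increasing (this relies on $\alpha$ being decreasing and finite on $(0,\infty)$, and on the restriction $r<x$ in the supremum), and that $\Theta_0$ is strictly increasing from $0$ to $\infty$ so that the passage from the Nash-type inequality to the weak Poincar\'e form via the elementary case distinction $x\le r$ versus $x>r$ is legitimate. Unlike Proposition \ref{super}, here the two-regime split is clean because the ``$r\,\Phi(u)$'' term already has the right shape, so no Legendre-type transform $\widetilde\beta(r)=\sup_s\{\Theta^{-1}(s)-rs\}$ is needed — the pointwise comparison above suffices. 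One should double-check that the factor $\tfrac14$ inside $\alpha$ (rather than $\tfrac12$) is forced by composing the two halvings: one from $B(\|u\|_2^2/2)$ in Theorem \ref{th1}, and one from the choice $r=x/2$ in the bound $B(x)\ge 1/(2\alpha(x/2))$.
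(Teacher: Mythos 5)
Your proof is correct, and it takes a somewhat different route from the paper's at the final step. Both proofs start the same way: rewrite \eqref{th312} for $\Phi(u)=1$ as a Nash-type inequality $\|u\|_2^2\,B(\|u\|_2^2)\le\langle Au,u\rangle$ with $B(x)=\sup_{r>0}\frac{1-r/x}{\alpha(r)}$, use the choice $r=x/2$ to get the lower bound $B(x)\ge\frac{1}{2\alpha(x/2)}$, and apply Theorem \ref{th1} with Remark \ref{remark}(ii) to obtain $\Theta_0(\|u\|_2^2)\le\langle f(A)u,u\rangle$ where $\Theta_0(x)=\tfrac{x}{2}f\big(\tfrac{1}{2\alpha(x/4)}\big)$. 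Where you diverge is in converting this back into a weak Poincar\'e inequality. The paper introduces a Legendre-type quantity $\widetilde{\alpha}(r)=\sup_{s>0}\{(\Theta^{-1}(s)-r)/s\}$ and then bounds it via the generalized inverse of $\Theta_0$, using the identity
\[
\frac{\Theta_0^{-1}(x)}{x}=2\left[f\left(\frac{1}{2\alpha(\Theta_0^{-1}(x)/4)}\right)\right]^{-1}
\]
and monotonicity of $\alpha$. You instead verify directly, pointwise in $x=\|u\|_2^2$, that
\[
x\le\alpha_f(r)\,\Theta_0(x)+r\quad\text{for all }x>0,
\]
via the two-case split: trivially for $x\le r$, and for $x>r$ by noticing $\alpha_f(r)\,\Theta_0(x)=x\cdot f\big(\tfrac{1}{2\alpha(x/4)}\big)/f\big(\tfrac{1}{2\alpha(r/4)}\big)\ge x$ because $\alpha$ is decreasing and $f$ is increasing. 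This is genuinely more elementary: it avoids introducing $\widetilde\alpha$, avoids generalized inverses of $\Theta$ and $\Theta_0$ altogether, and sidesteps the mild bookkeeping about $\Theta_0$ being a bijection from $(0,\infty)$ to $(0,\infty)$. The price is that your argument no longer runs in strict parallel with the super-Poincar\'e case (Proposition \ref{super}), where the $r$- and $\beta(r)$-slots are swapped and a Legendre transform seems harder to dodge. You correctly identify why the pointwise trick works here and not there: the free additive term $r\,\Phi(u)$ already has exactly the shape you need, so no transform is required. Both routes land on the same $\alpha_f$ with the same constants, including the factor $1/4$ inside $\alpha$, which you trace correctly to the two successive halvings.
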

\begin{proof}
Suppose that \eqref{th312} holds. As in the proof of Proposition \ref{super} we find that
\begin{equation}\label{proofth36}
    \|u\|_2^2\le \widetilde{\alpha}(r) \langle f(A)\,u,u\rangle+ r\,\Phi (u),
    \quad r>0,\; u\in D(f(A)),
\end{equation}
where
$$
    \widetilde{\alpha}(r)=\sup_{s>0}\bigg\{\frac{\Theta^{-1}(s)-r}{s}\bigg\}
\quad
\text{and}
\quad
    \Theta(x)=\frac{x}{2}\,\sup_{s>0}\,f\bigg(\frac{1-2s/x}{\alpha(s)}\bigg).
$$
If we set $s={x}/{4}$,
$$
    \Theta(x)
    \ge \frac{x}{2}\,f\bigg(\frac{1}{2\,\alpha(x/4)}\bigg):=\Theta_0(x),
$$
and this gives us
\begin{equation}\label{proofth37}
    \widetilde{\alpha}(r)
    = \sup_{s>0}\left\{\frac{\Theta_0^{-1}(s)-r}{s}\right\}
    \le \sup_{s>0,\,\Theta_0^{-1}(s)\ge r} \frac{\Theta_0^{-1}(s)}{s}.
\end{equation}
According to the definition of $\Theta_0(x)$, we have
$$
    \frac{\Theta_0^{-1}(x)}{x}={2}\left[{f\left(\frac{1}{2\,\alpha\big(\Theta_0^{-1}(x)/4\big)}\right)}\right]^{-1}.
$$
Since $\alpha$ is decreasing,
\begin{equation}\label{proofth38}\begin{aligned}
    \sup_{s>0,\,\Theta_0^{-1}(s)\ge r }\frac{\Theta_0^{-1}(s)}{s}
    &= \sup_{s>0,\,\Theta_0^{-1}(s)\ge r}{2}\bigg[{f\bigg(\frac{1}{2\,\alpha(\Theta_0^{-1}(s)/4)}\bigg)}\bigg]^{-1}\\
    &\le {2}\bigg[{f\bigg(\frac{1}{2\,\alpha(r/4)}\bigg)}\bigg]^{-1}.
\end{aligned}\end{equation}
The required inequality \eqref{th322} follows from \eqref{proofth36}, \eqref{proofth37} and
\eqref{proofth38}.
\end{proof}

\subsection{The converses of Theorem \ref{th1} and Propositions \ref{super} and \ref{weak}}

If $A$ is a nonnegative self-adjoint operator, then it is possible to show a converse to the assertions of Theorem \ref{th1} and Propositions \ref{super} and \ref{weak}.
\begin{proposition}\label{probb}
Let $A$ be a nonnegative self-adjoint operator on $L^2(X,m)$, and $f$ be some non-degenerate Bernstein function. Let $\Phi : L^2(X,m) \to [0,\infty]$ be a measurable
functional satisfying $\Phi(cu)=c^2\Phi(u)$ and $\Phi(T_tu)\le
\Phi(u)$ for all $t\geq 0$, where $(T_t)_{t\geq 0}$ is the semigroup
generated by $A$.
    If the following Nash-type inequality
    $$
        {\|u\|_2^2}\,f\left(B\left({\|u\|_2^2}\right)\right)
        \le \langle f(A)\,u,u\rangle,
        \quad u\in D(f(A)),\; \Phi(u)=1
    $$
    holds for some increasing function $B:(0,\infty)\rightarrow(0,\infty)$, then
    $$
        {\|u\|_2^2}\,B\left({\|u\|_2^2}\right)
        \le \langle A\,u,u\rangle,
        \quad u\in D(A),\; \Phi(u)=1.
    $$
\end{proposition}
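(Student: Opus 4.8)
The plan is to avoid the semigroup machinery of Section~\ref{section2} entirely and argue directly, using the spectral theorem together with the concavity of Bernstein functions. Since $A\ge 0$ is self-adjoint, I would write $A=\int_{[0,\infty)}\lambda\,dE_\lambda$ and, for $u\in L^2(X,m)$, introduce the spectral measure $\mu_u(d\lambda)=d\langle E_\lambda u,u\rangle$ on $[0,\infty)$, a finite measure of total mass $\|u\|_2^2$. For $u\in D(A)$ one has $\langle Au,u\rangle=\int\lambda\,\mu_u(d\lambda)$, and since every Bernstein function satisfies $f(\lambda)\le f(1)(1+\lambda)$ (because $f$ is increasing and $\lambda\mapsto f(\lambda)/\lambda$ is decreasing, both being consequences of concavity together with $f(0)=a\ge 0$), one gets $D(A)\subseteq D(f(A))$ and $\langle f(A)u,u\rangle=\int f(\lambda)\,\mu_u(d\lambda)$. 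In particular, the hypothesised Nash-type inequality for $f(A)$ applies to every $u\in D(A)$ with $\Phi(u)=1$.

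The key step is Jensen's inequality. Fix $u\in D(A)$, $u\neq 0$, with $\Phi(u)=1$, and view $\mu_u/\|u\|_2^2$ as a probability measure on $[0,\infty)$. Since $f$ is concave, Jensen's inequality gives
$$
    \frac{\langle f(A)u,u\rangle}{\|u\|_2^2}
    = \int_{[0,\infty)} f(\lambda)\,\frac{\mu_u(d\lambda)}{\|u\|_2^2}
    \le f\!\left(\int_{[0,\infty)}\lambda\,\frac{\mu_u(d\lambda)}{\|u\|_2^2}\right)
    = f\!\left(\frac{\langle Au,u\rangle}{\|u\|_2^2}\right).
$$
Combining this with the assumed inequality $\|u\|_2^2\,f(B(\|u\|_2^2))\le\langle f(A)u,u\rangle$ and dividing by $\|u\|_2^2>0$ yields $f(B(\|u\|_2^2))\le f\big(\langle Au,u\rangle/\|u\|_2^2\big)$.

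It then remains to cancel $f$, and this is where the \emph{non-degeneracy} of $f$ enters: differentiating the representation \eqref{bfunction} gives $f'(\lambda)=b+\int_{(0,\infty)}t\,e^{-t\lambda}\,\nu(dt)>0$ for every $\lambda>0$ whenever $(b,\nu)\neq(0,0)$, so $f$ is strictly increasing on $[0,\infty)$ and $f(x)>f(0)$ for $x>0$. Since $B(\|u\|_2^2)>0$, the inequality $f(B(\|u\|_2^2))\le f\big(\langle Au,u\rangle/\|u\|_2^2\big)$ forces $\langle Au,u\rangle/\|u\|_2^2>0$ (otherwise the right-hand side equals $f(0)<f(B(\|u\|_2^2))$), and then strict monotonicity gives $B(\|u\|_2^2)\le\langle Au,u\rangle/\|u\|_2^2$. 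Multiplying by $\|u\|_2^2$ produces exactly the Nash-type inequality for $A$.

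I expect the only delicate point to be getting the direction of Jensen's inequality right: it is the \emph{concavity} of Bernstein functions that makes $\langle f(A)u,u\rangle$ controllable \emph{from above} by $f$ of the spectral mean $\langle Au,u\rangle/\|u\|_2^2$, and this is the entire content of the argument; the domain inclusion $D(A)\subseteq D(f(A))$, the strict monotonicity of $f$, and the trivial edge case $\langle Au,u\rangle=0$ are all routine. It is worth noting that, unlike the forward direction (Theorem~\ref{th1}), this converse uses neither the $L^1$-contractivity of $(T_t)_{t\ge0}$ nor the compatibility of $\Phi$ with the semigroup nor even the homogeneity $\Phi(cu)=c^2\Phi(u)$: the self-adjointness of $A$ (for the spectral calculus) and the non-degeneracy of $f$ (for cancelling $f$) are the only structural facts used, and the constants come out clean ($c_1=c_2=1$).
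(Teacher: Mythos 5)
Your proof is correct and takes essentially the same route as the paper: both proofs run Jensen's inequality over the spectral measure $dE_\lambda(u,u)/\|u\|_2^2$ and then undo $f$ using its strict monotonicity. The only cosmetic difference is that the paper phrases the Jensen step as convexity of $f^{-1}$ applied to $\int f(\lambda)\,dE_\lambda(u,u)/\|u\|_2^2$, whereas you phrase it as concavity of $f$ applied to $\int\lambda\,dE_\lambda(u,u)/\|u\|_2^2$; these are the same inequality read in opposite directions.
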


\begin{proof}
    Every non-degenerate (i.e.\ non-constant) Bernstein function $f$ is strictly increasing and concave. Thus $f^{-1}$ is strictly increasing and convex. Let $(E_\lambda)_{\lambda\geq 0}$ be the spectral resolution of the self-adjoint operator $A$. Using Jensen's inequality we get for all $u\in D(A)$ with $\nnorm u_1=1$
    \begin{align*}
        B\left(\nnorm u_2^2\right)
        &= f^{-1}\circ f\left(B\left(\nnorm u_2^2\right)\right)\\
        &\leq f^{-1}\left(\frac{\scalar{f(A)u,u}}{\nnorm u_2^2}\right)\\
        &= f^{-1}\left(\int_{[0,\infty)} f(\lambda) \,\frac{dE_\lambda(u,u)}{\nnorm u_2^2}\right)\\
        &\le \int_{[0,\infty)} f^{-1}\circ f(\lambda) \,\frac{dE_\lambda(u,u)}{\nnorm u_2^2}\\
        &= \frac{\scalar{Au,u}}{\nnorm u_2^2},
    \end{align*}
    cf.\ also \cite[Proposition 2.3]{BM}.
\end{proof}

Using Proposition \ref{probb} we can get the converses of Propositions \ref{super} and \ref{weak}. For example, if the following super-Poincar\'{e} inequality
    $$
        \|u\|_2^2
        \le r\, \langle f(A)\,u,u\rangle+ {\beta}_f(r)\,\Phi (u),
        \quad r>0,\; u\in D(f(A))
    $$
    holds for some decreasing function $\beta_f:(0,\infty)\rightarrow(0,\infty)$, then
    $$
        \|u\|_2^2
        \le r\, \langle A\,u,u\rangle+ \beta(r)\,\Phi (u),
        \quad r>0,\; u\in D(A),
    $$
    where
    $$
        \beta(r)=2\,\beta_f\bigg(\frac{1}{2\,f(1/r)}\bigg).
    $$

\subsection{On-diagonal estimates for subordinate semigroups:
Nash type inequalities} In this section $X$ is the $n$-dimensional
Euclidean space $\R^n$ equipped with Lebesgue measure $m(dx)=dx$.
\begin{proposition}
Assume that $(A,D(A))$ satisfies the following Nash-type inequality
\begin{equation*}\label{e-rn-nash}
    \|u\|_2^2\, B\big(\|u\|_2^2\big) \leq \langle Au,u\rangle,
    \quad
    u\in D(A),\; \|u\|_1=1,
\end{equation*}
where $B:(0,\infty)\to (0,\infty)$ is some increasing function. Then, if for any $t>0$,
$$
    \eta(t):=\int_t^\infty \frac{du}{u\,f(B(u))}<\infty,
$$
the subordinate semigroup $(T_t^f)_{t\geq 0}$ has a bounded kernel $p^f_t(x,y)$ with respect to Lebesgue measure, and the following on-diagonal estimate holds:
\begin{equation*}\label{ultra-contractive}
   \mathop{\mathrm{ess\,sup}}_{x,y\in\R^d}p^f_t(x,y)=\big\|T_t^f\big\|_{1\to\infty}
    \leq 2\,\eta^{-1}\Big(\frac{t}{2}\Big).
\end{equation*}
\end{proposition}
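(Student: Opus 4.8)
The plan is to reduce the on-diagonal estimate to a Nash-type inequality for $f(A)$ via the standard equivalence between Nash inequalities and $L^1\to L^\infty$ bounds, which is exactly the content of the one-to-one correspondence established in Proposition \ref{p1}. First I would apply Theorem \ref{th1} to the hypothesis: since $(A,D(A))$ satisfies the Nash-type inequality with increasing function $B$, the subordinate generator $f(A)$ satisfies
\begin{equation*}
    \frac{\|u\|_2^2}{2}\, f\!\left(B\!\left(\frac{\|u\|_2^2}{2}\right)\right) \le \langle f(A)u,u\rangle, \quad u\in D(f(A)),\ \|u\|_1=1.
\end{equation*}
This is a Nash-type inequality for $f(A)$ with increasing function $\widetilde B(s):=\tfrac12 f(B(s/2))$ (after a harmless rescaling; more precisely, $\|u\|_2^2\widetilde B(\|u\|_2^2)\le\langle f(A)u,u\rangle$ with $\widetilde B(s)=\tfrac12 f(B(s/2))$).

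Next I would feed this into Proposition \ref{p1}, applied to the subordinate semigroup $(T_t^f)_{t\ge0}$ and its generator $f(A)$: the Nash-type inequality for $f(A)$ is equivalent to the decay estimate
\begin{equation*}
    \|T_t^f u\|_2^2 \le \widetilde G^{-1}\big(\widetilde G(\|u\|_2^2) - t\big), \quad t\ge 0,\ u\in D(f(A)),\ \|u\|_1=1,
\end{equation*}
where $\widetilde G(t)=\int_1^t \frac{ds}{2s\widetilde B(s)} = \int_1^t \frac{ds}{s\,f(B(s/2))}$. To identify $\widetilde G$ with the function $\eta$, observe that $\eta(t)=\int_t^\infty \frac{du}{u\,f(B(u))}$ is (up to the substitution $u=s/2$ and a sign) the tail of the same integral; the finiteness hypothesis $\eta(t)<\infty$ guarantees $\widetilde G(\infty)<\infty$, so $\widetilde G^{-1}$ is defined on a half-line and $\widetilde G^{-1}(G(\|u\|_2^2)-t)$ decays to $0$ as $t\to\infty$. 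Then I would combine the $L^2\to L^2$ decay with the $L^1\to L^2$ consequence and the symmetry/$L^1$-contractivity of $T_t^f$ (inherited from $T_t$) to run the usual Coulhon-type splitting $T_t^f = T_{t/2}^f T_{t/2}^f$, obtaining $\|T_t^f\|_{1\to\infty} = \|T_{t/2}^f\|_{1\to 2}^2$ and bounding $\|T_{t/2}^f u\|_2^2$ for $\|u\|_1=1$ by letting $\|u\|_2^2\to\infty$ in the decay estimate: $\|T_{t/2}^f u\|_2^2 \le \widetilde G^{-1}(G(\infty)-t/2)$, which after unwinding the definitions of $\widetilde G$ and $\eta$ gives the bound $2\,\eta^{-1}(t/2)$. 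The boundedness of the kernel $p_t^f(x,y)$ then follows from $\|T_t^f\|_{1\to\infty}<\infty$ by a standard Dunford-Pettis argument on $\R^n$ with Lebesgue measure.

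The main obstacle I anticipate is bookkeeping: carefully matching the function $\widetilde G$ coming out of Proposition \ref{p1} (which involves $\tfrac1{2s\widetilde B(s)}$ with $\widetilde B(s)=\tfrac12 f(B(s/2))$) against the function $\eta$ in the statement, keeping track of the factors of $2$ in the rescaling of $B$ and in the semigroup splitting, so that the constants collapse to exactly $2\,\eta^{-1}(t/2)$. A secondary point requiring care is the passage $\|u\|_2^2\to\infty$: one must justify that the right-hand side $\widetilde G^{-1}(\widetilde G(\|u\|_2^2)-t)$ has a finite limit as $\|u\|_2^2\to\infty$, which is precisely where the hypothesis $\eta(t)<\infty$ enters — it forces $\widetilde G(\infty)<\infty$ so that $\widetilde G^{-1}$ is defined at $\widetilde G(\infty)-t$ and equals $\eta^{-1}$ of a suitable argument. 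Neither step is conceptually hard, but both must be done with attention to the exact constants claimed.
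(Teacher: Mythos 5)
Your proposal is correct and follows essentially the same route as the paper. The paper first applies Theorem \ref{th1} to get the Nash-type inequality for $f(A)$, exactly as you do, and then simply cites Coulhon's Proposition II.2 (or Wang's Theorem 3.3.17\,(1)) for the standard conversion from a Nash-type inequality to an ultracontractivity bound --- which is precisely the step you carry out by hand via Proposition \ref{p1} and the splitting $T_t^f = T_{t/2}^f\circ T_{t/2}^f$, with your constant bookkeeping (in particular the identity $\widetilde G^{-1}(\widetilde G(\infty)-w)=2\,\eta^{-1}(w)$) collapsing correctly to the stated bound $2\,\eta^{-1}(t/2)$.
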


\begin{proof}
    By Theorem \ref{th1} we know that the generator $f(A)$ of the subordinate semigroup $(T_t^f)_{t\geq 0}$ satisfies
\begin{equation}\label{e-rn-nash2}
   \frac{ \|u\|_2^2}{2} \, f\circ B\left(\frac{\|u\|_2^2}{2}\right) \leq \langle f(A)\,u,u\rangle,
    \quad
    u\in D(f(A)),\; \|u\|_1=1.
\end{equation}
     Therefore the required assertion follows from \cite[Proposition II.2]{Con} or \cite[Theorem 3.3.17 (1), p.\ 158]{Wang}.
\end{proof}

\subsection{Contractivity of subordinate semigroups: Super- and Weak Poincar\'{e} inequalities}
  Let $(X,m)$ be a measure space with a $\sigma$-finite measure $m$.
    Let $(T_t)_{t\geq 0}$ be a semigroup on
    $L^2(X,m)$ which is bounded on $L^p(X,m)$ for all
    $p\in[1,\infty]$. This is, e.g., always the case for symmetric sub-Markovian contraction semigroups on $L^2(X,m)$.

    Recall that a semigroup $(T_t)_{t\ge0}$ is said to be \emph{hypercontractive} if $\|T_t\|_{2\to4}<\infty$ for some $t>0$, \emph{supercontractive} if $\|T_t\|_{2\to4}<\infty$ for all
    $t>0$, and \emph{ultracontractive} if $\|T_t\|_{1\to\infty}<\infty$ for all $t>0$.
    The example below improves \cite[Theorem 3.1]{BM}.

\begin{proposition}
    Let $f$ be a Bernstein function and $(T_t)_{t\geq 0}$ be an ultracontractive symmetric sub-Markovian semigroup on $L^2(X,m)$ such that for all $t>0$,
    $$
        \|T_t\|_{1\to\infty}
        \le \exp\Big(\lambda\,t^{-1/(\delta-1)}\Big)
    $$
    for some $\lambda>0$ and $\delta>1$. Then, we have the following statements for the subordinate semigroup $(T_t^f)_{t\ge0}$:
    \begin{enumerate}[\upshape (i)]
    \item If
        $\displaystyle \int_1^\infty \frac{dr}{f(r^\delta)}<\infty$, then $(T_t^f)_{t\ge0}$ is ultracontractive.
    \item
        If $\displaystyle \lim_{r\rightarrow\infty}\frac{f^{-1}(\lambda)}{\lambda^\delta}=0$, then $(T_t^f)_{t\ge0}$ is supercontractive.
    \item
        If $\displaystyle \lim_{r\rightarrow\infty}\frac{f^{-1}(\lambda)}{\lambda^\delta}\in(0,\infty)$, then $(T_t^f)_{t\ge0}$ is hypercontractive.
    \item
        If $\displaystyle \lim_{r\rightarrow\infty}\frac{f^{-1}(\lambda)}{\lambda^\delta}=\infty$, then $(T_t^f)_{t\ge0}$ is not hypercontractive.
    \end{enumerate}
\end{proposition}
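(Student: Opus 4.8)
The plan is to reduce the statement to a single Nash-type inequality for $A$, transfer it to $f(A)$ via Theorem~\ref{th1}, and then read off the three contractivity notions from the transferred profile. A killing term $a>0$ only multiplies the semigroup by $e^{-at}$ and sharpens all inequalities, so it may be dropped; if the drift is $b>0$ then $f(\lambda)\ge b\lambda$, hence $f^{-1}(\lambda)\le\lambda/b$ and $f^{-1}(\lambda)/\lambda^\delta\to0$ (as $\delta>1$), and since $T_t^f=T_{bt}\circ T_t^h$ for some Bernstein function $h$ with triplet $(0,0,\nu')$, the semigroup $(T_t^f)_{t\ge0}$ is ultracontractive and (i)--(iii) are immediate. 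So we assume that $f$ has triplet $(0,0,\nu)$.

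\emph{Step 1.} By the standard equivalence between ultracontractivity estimates and Nash-type inequalities for symmetric sub-Markovian semigroups (Coulhon's theorem; see \cite{Con} and \cite[Theorem~3.3.17]{Wang}), the bound $\|T_t\|_{1\to\infty}\le\exp(\lambda t^{-1/(\delta-1)})$ is equivalent to
$$
    \|u\|_2^2\,B_0\big(\|u\|_2^2\big)\le\langle A\,u,u\rangle,\qquad u\in D(A),\ \|u\|_1=1,
$$
for some increasing $B_0:(0,\infty)\to(0,\infty)$ with $B_0(x)\asymp(\log x)^\delta$ for large $x$, the implied constant depending only on $\lambda,\delta$; one pins it down by running the profile $G'(x)=1/(2xB_0(x))$ of Proposition~\ref{p1} together with $\|T_s\|_{1\to\infty}=\|T_{s/2}\|_{1\to 2}^2$. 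Theorem~\ref{th1} then gives a Nash-type inequality for the subordinate generator,
$$
    \|u\|_2^2\,\hat B\big(\|u\|_2^2\big)\le\langle f(A)\,u,u\rangle,\qquad u\in D(f(A)),\ \|u\|_1=1,\quad\hat B(x):=\tfrac12\,f\!\left(B_0(x/2)\right),
$$
so that $\hat B(x)\asymp f\big((\log x)^\delta\big)$ for large $x$.

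\emph{Step 2 (parts (i)--(iii)).} For (i), the decay/on-diagonal criterion of \cite[Proposition~II.2]{Con} (see also the on-diagonal estimate obtained above) makes $(T_t^f)_{t\ge0}$ ultracontractive whenever $\int^\infty\frac{du}{u\hat B(u)}<\infty$; substituting $w=\log u$ and rescaling $w$ turns this into a finite multiple of $\int_1^\infty\frac{dv}{f(v^\delta)}$, i.e.\ the hypothesis of (i). For (ii) and (iii), use that for symmetric sub-Markovian semigroups supercontractivity is equivalent to the super log-Sobolev inequality and hypercontractivity to the defective log-Sobolev inequality (\cite[Chapter~3]{Wang}; cf.\ \cite[Theorem~3.1]{BM} and Gross' theorem), and that a Nash-type inequality with profile $\hat B$ yields the first once $\hat B(x)/\log x\to\infty$ and the second once $\liminf_{x\to\infty}\hat B(x)/\log x>0$. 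Writing $\mu=(\log x)^\delta$ (so $\log x\asymp\mu^{1/\delta}$) gives $\hat B(x)/\log x\asymp f(\mu)/\mu^{1/\delta}$, and since $f$ is strictly increasing the substitution $\lambda=f(\mu)$ shows $f(\mu)\gg\mu^{1/\delta}\Longleftrightarrow f^{-1}(\lambda)=o(\lambda^\delta)$ and $f(\mu)\asymp\mu^{1/\delta}\Longleftrightarrow f^{-1}(\lambda)\asymp\lambda^\delta$ --- the hypotheses of (ii) and (iii).

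\emph{Step 3 (part (iv), the main obstacle).} If $f^{-1}(\lambda)/\lambda^\delta\to\infty$ the same computation gives $\hat B(x)=o(\log x)$, so the inequality from Theorem~\ref{th1} is now too weak to force even a defective log-Sobolev inequality --- and a negative conclusion cannot be reached by propagating an upper bound. The implication has to be run backwards: if $(T_t^f)_{t\ge0}$ were hypercontractive, then $f(A)$ would satisfy a defective log-Sobolev inequality, and using $f(\lambda)\le\varepsilon\lambda+M_\varepsilon$ with $M_\varepsilon:=\sup_{\lambda\ge0}(f(\lambda)-\varepsilon\lambda)<\infty$ (finite for every $\varepsilon>0$ because the triplet is $(0,0,\nu)$) together with the spectral calculus of the self-adjoint operator $A$ --- the mechanism behind Proposition~\ref{probb} --- one transfers this to a defective log-Sobolev, hence Nash-type, inequality for $A$ whose profile grows like $\log$, which by Step~1 contradicts the sharpness of the rate $\exp(\lambda t^{-1/(\delta-1)})$. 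I expect this last point to be the crux: making the spectral-calculus transfer precise and identifying in which (two-sided) sense the ultracontractivity rate has to be sharp for (iv) to hold --- the examples behind \cite[Theorem~3.1]{BM} indicate that it should be read as a matching lower bound near $t=0$. The other three assertions are bookkeeping once $B_0$ and $\hat B$ are in hand.
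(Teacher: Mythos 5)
Your route is essentially the right one, but it detours through Theorem~\ref{th1} and Nash-type profiles where the paper goes straight through Proposition~\ref{super}. The paper's proof is short: the ultracontractivity bound is equivalent, via \cite[Proposition~II.4]{Con} and \cite[Proposition~3.3.16]{Wang}, to a super-Poincar\'e inequality for $A$ with $\beta(r)=c_1[\exp(c_2 r^{-1/\delta})-1]$; Proposition~\ref{super} then produces a super-Poincar\'e for $f(A)$ with
$\beta_f(r)=4c_1\{\exp[c_3(f^{-1}(2/r))^{1/\delta}]-1\}$; and \cite[Theorems~3.3.13, 3.3.14]{Wang} read off ultra-/super-/hyper-contractivity directly from the rate of $\beta_f$. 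Your Step~1--2 do the same bookkeeping in terms of the Nash profile $\hat B$, which is morally equivalent (super-Poincar\'e inequalities and Nash-type inequalities are interchangeable via the $\sup$/$\inf$ duality that Proposition~\ref{super} itself exploits), but then you need a second, unwritten translation from Nash profiles to log-Sobolev/Gross theory for (ii) and (iii); the paper's super-Poincar\'e route lands exactly where Wang's criteria expect it and avoids that extra step. Your observation that a positive drift $b$ makes everything trivial is correct and is a small bonus not in the paper, which instead just cites the general theorems.

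On part~(iv), you have correctly identified that the implication must be run backwards and that the converse must be the mechanism of Proposition~\ref{probb}; the paper does exactly this, by invoking the comment after Proposition~\ref{probb} (the converse transfer of super-Poincar\'e through $f^{-1}$, which is Jensen's inequality for the spectral resolution). But your worry that the hypothesis is only a one-sided bound $\|T_t\|_{1\to\infty}\leq\exp(\lambda t^{-1/(\delta-1)})$ is a genuine and non-trivial point: hypercontractivity of $T_t^f$ plus the converse transfer yields a super-Poincar\'e (equivalently, a defective log-Sobolev) inequality for $A$, but this does not by itself contradict an \emph{upper} bound on $\|T_t\|_{1\to\infty}$ --- the original semigroup could simply be more regularizing than the stated rate. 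The paper does not spell this out and simply cites Wang's theorems; if one reads the hypothesis as a two-sided estimate (as the corresponding statements in \cite{BM} that the proposition is said to improve are formulated), then the contradiction is legitimate. So your ``crux'' is indeed the crux, and the paper's proof is terser than it should be precisely at the point you flagged; you have identified a real soft spot rather than a misunderstanding on your part.
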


\begin{proof}
    Denote by $A$ and $f(A)$ the generators of the semigroups $(T_t)_{t\geq 0}$ and $(T^f_t)_{t\geq 0}$, respectively. By \cite[Proposition II. 4]{Con} and \cite[Proposition 3.3.16, p.\ 157]{Wang}, we know that the following super-Poincar\'{e} inequality holds:
    $$
        \|u\|_2^2
        \le r\, \langle A\,u,u\rangle+ \beta(r)\,\|u\|_1^2,
        \quad r>0,\; u\in D(A),
    $$
    where
        $$
        \beta(r)=c_1\Big[\exp\big(c_2\,r^{-1/\delta}\big)-1\Big]
    $$
    for some $c_1,c_2>0$. By Proposition \ref{super},
    $$
        \|u\|_2^2
        \le r\, \langle f(A)\,u,u\rangle+ \beta_f(r)\,\|u\|_1^2,
        \quad r>0,\; u\in D(f(A)),
    $$
    where
    $$
        \beta_f(r)
        =4c_1\bigg\{\exp\left[c_3\left(f^{-1}\big(2/r\big)\right)^{1/\delta}\right]-1\bigg\}
    $$
    for some constant $c_3>0$. Therefore, the required assertions follow from
    \cite[Theorem 3.3.14, p.\ 156 and Theorem 3.3.13, p.\ 155]{Wang} and the comment after Proposition \ref{probb}.
\end{proof}

We close this section with a result that shows how decay properties are inherited under subordination.
\begin{proposition}
    Let $(T_t)_{t\geq 0}$ be a symmetric sub-Markovian semigroup on $L^2(X,m)$. Assume that there exist two constants $\delta, c_0>0$ such that
    $$
        \|T_tu\|^2_2
        \le \frac{c_0\, \Phi(u)}{t^\delta}
        \quad\text{for all\ \ } t>0,\; u\in L^2(X,m),
    $$
    where $\Phi: L^2(X,m)\rightarrow [0,\infty]$ is a functional satisfying
    $\Phi(cu)=c^2\Phi(u)$ and $\Phi(T_tu)\le \Phi(u)$ for all $c\in \R$ and $t\ge0$. If
    $$
        \eta(t):=\int_t^\infty\frac{ds}{sf(s)}
        < \infty
        \quad\text{for all\ \ } t>0,
    $$
    then there are constants $c_1,c_2>0$ such that
    $$
        \|T_t^fu\|_2^2
        \le c_1\,\Big[\eta^{-1}(c_2{t})\Big]^\delta \,\Phi(u).
    $$
\end{proposition}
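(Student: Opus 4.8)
The plan is to route the argument through Nash-type inequalities and the equivalence in Proposition~\ref{p1}: turn the on-diagonal decay hypothesis for $(T_t)_{t\ge0}$ into a Nash-type inequality for $A$, transport it to $f(A)$ by Theorem~\ref{th1} in the version of Remark~\ref{remark}(ii) (with the functional $\Phi$ in place of $\|\cdot\|_1$), and then read off from Proposition~\ref{p1}, applied to the subordinate semigroup, an on-diagonal decay estimate whose rate turns out to be governed by $\eta$.

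First I would reduce to the case $\Phi(u)=1$: both sides of the asserted estimate are homogeneous of degree $2$ under $u\mapsto cu$, the case $\Phi(u)=\infty$ is trivial, and $\Phi(u)=0$ forces $u=0$ (combine the hypothesis with strong continuity). Since $(T_t)_{t\ge0}$ is a symmetric contraction semigroup, its generator $A$ is nonnegative self-adjoint, so $s\mapsto\langle AT_su,T_su\rangle$ is non-increasing, whence for $u\in D(A)$ with $\Phi(u)=1$
\begin{equation*}
    \|u\|_2^2-\|T_tu\|_2^2=2\int_0^t\langle AT_su,T_su\rangle\,ds\le 2t\,\langle Au,u\rangle ,
\end{equation*}
and the hypothesis gives $\|u\|_2^2\le c_0\,t^{-\delta}+2t\,\langle Au,u\rangle$ for every $t>0$. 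Optimising in $t$ yields a Nash-type inequality $\|u\|_2^2\,B(\|u\|_2^2)\le\langle Au,u\rangle$ with $B(x)=C_0^{-1}x^{1/\delta}$ for a constant $C_0=C_0(c_0,\delta)>0$; $B$ is positive and increasing, so Theorem~\ref{th1} applies (with $\Phi$, as in Remark~\ref{remark}(ii)) and produces
\begin{equation*}
    \|u\|_2^2\,B_f(\|u\|_2^2)\le\langle f(A)u,u\rangle ,\qquad B_f(x):=\tfrac12\,f\!\Big(C_0^{-1}(x/2)^{1/\delta}\Big),
\end{equation*}
for all $u\in D(f(A))$ with $\Phi(u)=1$, where $B_f$ is again positive and increasing.

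Next I would feed this back into Proposition~\ref{p1}, now applied to the pair $\big(f(A),(T^f_t)_{t\ge0}\big)$: the subordinate semigroup is again a symmetric $C_0$-contraction on $L^2$ that contracts $L^1$, and $\Phi$ is non-increasing along it (this is immediate when $\Phi$ is convex, e.g.\ $\Phi=\|\cdot\|_1^2$, because $T^f_tu=\int_{[0,\infty)}T_su\,\mu_t(ds)$ is an average of the contractions $T_s$). Proposition~\ref{p1} then gives $\|T^f_tu\|_2^2\le G_f^{-1}\big(G_f(\|u\|_2^2)-t\big)$ with $G_f(r)=\int_1^r(2sB_f(s))^{-1}\,ds$. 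The decisive computation is the substitution $\rho=C_0^{-1}(s/2)^{1/\delta}$, under which $2sB_f(s)=s\,f(\rho)$ and $ds/s=\delta\,d\rho/\rho$, so that
\begin{equation*}
    G_f(r)=\delta\big(\eta(\rho_0)-\eta(\rho(r))\big),\qquad \rho_0:=C_0^{-1}2^{-1/\delta},\quad \rho(r):=C_0^{-1}(r/2)^{1/\delta}.
\end{equation*}
Here the hypothesis $\eta(t)<\infty$ for all $t>0$ is used exactly once, but crucially: it makes $G_f$ bounded above, $G_f(\infty)=\delta\,\eta(\rho_0)<\infty$, while $G_f(0^+)=-\infty$ as in the proof of Theorem~\ref{th1}. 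Since $G_f$ is increasing, $G_f(\|u\|_2^2)\le G_f(\infty)$, hence $\|T^f_tu\|_2^2\le G_f^{-1}\big(G_f(\infty)-t\big)$; and $r=G_f^{-1}\big(G_f(\infty)-t\big)$ solves $\eta(\rho(r))=t/\delta$, i.e.\ $r=2\,C_0^\delta\,[\eta^{-1}(t/\delta)]^\delta$. Thus $\|T^f_tu\|_2^2\le 2C_0^\delta[\eta^{-1}(t/\delta)]^\delta$ for $\Phi(u)=1$; undoing the normalisation and a standard approximation to pass from $D(f(A))$ to general $u\in L^2$ give the claim with $c_1=2C_0^\delta$ and $c_2=1/\delta$.

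The routine but slightly fiddly parts are tracking the constant $C_0$ through the reduction from the decay estimate to a Nash inequality for $f(A)$, and verifying that the $\Phi$-versions of Proposition~\ref{p1} and Theorem~\ref{th1} indeed apply to the subordinate semigroup. I expect the genuine point to be the identification $G_f=\delta(\eta(\rho_0)-\eta\circ\rho)$: the integrability assumption $\eta(t)<\infty$ is precisely the statement that $G_f$ has a finite supremum, and it is this boundedness from above that lets the dependence on $\|u\|_2^2$ drop out, leaving a right-hand side that involves only $\Phi(u)$.
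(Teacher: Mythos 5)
Your proof is correct and follows a genuinely different route from the paper's. The paper converts the on-diagonal decay to a weak Poincar\'e inequality by citing external results from Wang's monograph, transports that inequality to $f(A)$ via Proposition~\ref{weak}, and then converts back to decay by citing another external result of Wang. You instead stay entirely inside the paper's own machinery: you derive a Nash-type inequality $\|u\|_2^2\,B(\|u\|_2^2)\le\langle Au,u\rangle$ with $B(x)=C_0^{-1}x^{1/\delta}$ directly from the decay hypothesis (using the standard fact that $s\mapsto\langle AT_su,T_su\rangle$ is non-increasing for a nonnegative self-adjoint $A$), push it to $f(A)$ by Theorem~\ref{th1} with Remark~\ref{remark}(ii), and then invoke (the $\Phi$-version of) Proposition~\ref{p1} for the subordinate semigroup together with the explicit substitution $\rho=C_0^{-1}(s/2)^{1/\delta}$ to identify $G_f$ with $\delta\big(\eta(\rho_0)-\eta\circ\rho\big)$. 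This buys you a self-contained argument with explicitly tracked constants and a transparent explanation of why the integrability of $\eta$ is exactly what makes $G_f$ bounded above and hence makes the $\|u\|_2^2$-dependence drop out; the paper's proof is shorter on the page but is essentially a sequence of citations, and it does not make the role of $\eta<\infty$ visible.

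One point deserves to be flagged explicitly rather than parenthetically: applying (the $\Phi$-version of) Proposition~\ref{p1} to the pair $\big(f(A),(T_t^f)_{t\ge0}\big)$ requires $\Phi(T_t^fu)\le\Phi(u)$, whereas the hypothesis of the proposition only states $\Phi(T_tu)\le\Phi(u)$ for the original semigroup. You note this holds when $\Phi$ is convex (then $\Phi^{1/2}$ is a seminorm and $T_t^fu=\int T_su\,\mu_t(ds)$ with $\mu_t$ a sub-probability, so $\Phi^{1/2}(T_t^fu)\le\int\Phi^{1/2}(T_su)\,\mu_t(ds)\le\Phi^{1/2}(u)$); that covers the intended examples such as $\Phi=\|\cdot\|_1^2$. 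This is not a defect of your argument relative to the paper's: the paper's final step, citing Wang's Theorem~4.1.7 to go from a weak Poincar\'e inequality for $f(A)$ back to decay of $(T_t^f)$, uses exactly the same monotonicity of $\Phi$ along the subordinate semigroup, just silently. So both proofs share this implicit hypothesis; you have the advantage of making it visible.
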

\begin{proof}
    Denote by $A$ and $f(A)$ the generators of $(T_t)_{t\geq 0}$ and $(T^f_t)_{t\geq 0}$, respectively.
    From \cite[Corollary 4.1.8 (1), p.\ 189; and Corollary 4.1.5 (2), p.\ 186]{Wang} we know that the following weak Poincar\'{e} inequality holds:
    $$
        \|u\|_2^2
        \le \alpha(r)\, \langle A\,u,u\rangle+ r\,\Phi (u),
        \quad r>0,\; u\in D(A),
    $$
    where
    $$
        \alpha(r)=c_3\,r^{-1/\delta}
    $$
    for some $c_3>0$. Proposition \ref{weak} shows that
    $$
        \|u\|_2^2
        \le \alpha_f(r)\, \langle f(A)\,u,u\rangle+ r\,\Phi (u),
        \quad r>0,\; u\in D(f(A)),
    $$
    where
    $$
        \alpha_f(r)={2}\Big[{f\big(c_4 r^{1/\delta}\big)}\Big]^{-1}
    $$
    for some constant $c_4>0$. Therefore, the assertion follows from \cite[Theorem 4.1.7, p.\ 188]{Wang}.
\end{proof}

\bigskip

\begin{ack}
The authors would like to thank
Professors Mu-Fa Chen and Feng-Yu Wang for helpful comments on earlier versions of the paper.
Financial support through DAAD (for R.L.S.) and
the Alexander-von-Humboldt Foundation
(for J.W.)
is gratefully
acknowledged.
\end{ack}

\end{document}